\theoremstyle{plain}
\newtheorem*{theorem Etemadi}{Etemadi's Theorem (\cite{Etemadi})}
\newtheorem*{claim}{Claim}
\theoremstyle{remark}
\newtheorem*{remark*}{Remark}
\theoremstyle{definition}
\newtheorem{example**}{Example}
\newtheorem*{notation}{Notation}
\newcommand*{\Folge}[2]{(#1_#2)_{#2 \in \mathbb{N}}} % generiert eine Folge (indiziert mit natürlichen Zahlen)
\newcommand*{\Erwartung}[1]{\mathsf{E} \hspace{0.03cm} {#1}} % Erwartungswert
\newcommand*{\Varianz}[1]{\mathsf{V} \hspace{0.03cm} {#1}} % Varianz
\newcommand*{\Prob}[1]{\mathsf{P} \hspace{0.04cm} {#1}} % Wahrscheinlichkeit
\newcommand*{\define}{\coloneqq}
\DeclareMathOperator*{\essinf}{ess\,inf}
\author{Maximilian Janisch}
\newcommand{\customfootnotetext}[2]{{% Group to localize change to footnote
\renewcommand{\thefootnote}{#1}% Update footnote counter representation
\footnotetext[0]{#2}}}% Print footnote text
\begin{document}
\begin{center}  % Titel
	%\vspace*{1cm}
	{\Large\itshape Maximilian Janisch\textsuperscript{*}\par}
	\customfootnotetext{*}{The paper was initially written for a student seminar on probability theory under supervision of Prof. Jean Bertoin, University of Zürich in early 2019. At the time I was 15 years old. My email address is \href{mailto:mail@maximilianjanisch.com}{mail@maximilianjanisch.com}. Currently I am a graduate student at the Institute of Mathematics, University of Zürich (190, Winterthurerstrasse, 8057 Zürich, Switzerland).
}
	\vspace{0.6cm}
	{\LARGE\bfseries Kolmogoroff's Strong Law of Large Numbers holds for pairwise uncorrelated random variables\par}
	% The following are in the footnote:
	%\vspace{0.8cm}
	%{\scshape\Large Subtitle\par}
	%\vspace{0.8cm}
	%{\Large Institute}
	\vspace{0.6cm}
	{\large Original version written on May 8, 2020; Text last updated January 31, 2022;\\Journal information updated on October 27, 2021\par}
	\thispagestyle{empty}
\end{center}
\vspace{0.6cm}
\begin{abstract}
	Using the approach of N. Etemadi for the Strong Law of Large Numbers (\textit{SLLN}) in \cite{Etemadi} and its elaboration in \cite{Csoergo}, I give weaker conditions under which the SLLN still holds, namely for pairwise uncorrelated (and also for \enquote{quasi uncorrelated}) random variables. I am focusing in particular on random variables which are not identically distributed. My approach here leads to another simple proof of the classical SLLN.
	
	\smallskip
	\textsc{Published in (English edition):} \emph{Theory of Probability and its Applications}, 2021, Volume 66, Issue 2, Pages 263–275. \\ \url{https://epubs.siam.org/doi/10.1137/S0040585X97T990381}.\\
	\textsc{Doi:} \url{https://doi.org/10.1137/S0040585X97T990381}.
	
	\smallskip
	\textsc{Published in (Russian edition):} \emph{Teoriya Veroyatnostei i ee Primeneniya} (this is the Russian edition of \emph{Theory of Probability and its Applications}), 2021, Volume 66, Issue 2, Pages 327–341. (Submitted November 23, 2020; Accepted November 25, 2020).\\ \url{http://mi.mathnet.ru/tvp5459}. \\
	\textsc{Doi:} \url{https://doi.org/10.4213/tvp5459}.
\end{abstract}

\section{Introduction and results}
In publication \cite{Etemadi} from 1981, N. Etemadi weakened the requirements of Kolmogoroff’s first SLLN for identically distributed random variables and provided an elementary proof of a more general SLLN. I will first state the main Theorem of \cite{Etemadi} after introducing some notations that I will use.

\begin{notation}
	Throughout the paper, $(\Omega, \mathcal{A}, \mathsf{P})$ is a probability space. 
	All random variables (usually denoted by $X$ or $X_n$) are assumed to be measurable functions from $\Omega$ to the set of real numbers $\mathbb R$. If $X$ is a random variable, I write $\mathsf EX$ for its expected value and $\mathsf VX$ for its variance.
\end{notation}

\begin{theorem Etemadi}
	Let $\Folge{X}{n}$ be a sequence of \emph{pairwise} independent, identically distributed random variables with $\Erwartung{\abs*{X_1}}<\infty$. Then 
	\begin{equation}\label{eq:Etemadi conclusion}
		\lim_{n \to \infty} \frac{X_1+X_2+\dots+X_n}{n} =  \Erwartung{X_1} \quad \text{almost surely (a.s.)}.
	\end{equation}
\end{theorem Etemadi}

\begin{remark}
	Following Remark 2 from chapter 4.3 in Shiryaev's book \cite{Shiryaev}, we notice that if $\mathsf E X_1=\infty$, then \eqref{eq:Etemadi conclusion} still holds.
\end{remark}

The main improvement of Etemadi's Theorem compared to Kolmogoroff's first SLLN is the replacement of \emph{independence} by \emph{pairwise independence}. The condition \enquote{identically distributed} still remains. There have been several extentions of this Theorem. For example, Chandra and Goswami gave a sufficient SLLN condition for non-identically distributed random variables. For convenience, I will from now on always use the notation $S_n\define X_1+\dots+X_n$.

\begin{theorem}[Chandra, Goswami \cite{Chandra-Goswami}]
	Let $(X_n)_{n\in\N}$ be a sequence of pairwise independent random variables such that
	\begin{equation*}
		\int_0^\infty \sup_{n\in\N} \mathsf P(\abs{X_n}>t)\,\mathrm dt<\infty.
	\end{equation*}
	Then 
	\begin{equation*}
		\lim_{n \to \infty} \frac{S_n-\mathsf E S_n}{n} = 0 \quad \text{a.s.}
	\end{equation*}
\end{theorem}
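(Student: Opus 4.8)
The plan is to adapt Etemadi's truncation-and-subsequence method, supplying two new ingredients to cope with non-identical distributions: a variance bound extracted directly from the integral hypothesis, and a uniform control of the centering terms. I would first reduce to the nonnegative case by splitting $X_n = X_n^+ - X_n^-$; since $X_n^+$ and $X_n^-$ are each pairwise independent functions of the $X_n$ and satisfy $\Prob{(X_n^\pm > t)} \le \Prob{(\abs{X_n}>t)}$, the hypothesis is inherited by both sequences, and it suffices to treat $X_n \ge 0$. Write $H(t) \define \sup_n \Prob{(\abs{X_n}>t)}$; then $\int_0^\infty H < \infty$ together with the monotonicity of $H$ yields $M \define \sup_n \Erwartung{\abs{X_n}} < \infty$, the bound $\sum_n H(n)<\infty$, and $tH(t)\to 0$. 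Truncate via $Y_n \define X_n \mathbf 1_{\{X_n \le n\}}$ and set $T_n \define Y_1 + \dots + Y_n$. Since $\sum_n \Prob{(X_n \ne Y_n)} = \sum_n \Prob{(X_n > n)} \le \sum_n H(n) < \infty$, Borel--Cantelli gives $X_n = Y_n$ eventually a.s., so $(S_n - T_n)/n \to 0$ a.s.; and $\tfrac1n(\Erwartung{S_n} - \Erwartung{T_n}) = \tfrac1n\sum_{k\le n}\Erwartung{(X_k\mathbf 1_{\{X_k>k\}})} \to 0$ by Cesàro, since each summand is bounded by $kH(k) + \int_k^\infty H \to 0$. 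Everything thus reduces to showing $(T_n - \Erwartung{T_n})/n \to 0$ a.s.

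The decisive estimate is $\sum_j \Varianz{Y_j}/j^2 < \infty$. Using $\Varianz{Y_j} \le \Erwartung{(Y_j^2)} = \int_0^j 2t\,\Prob{(Y_j>t)}\,dt \le \int_0^j 2t\,H(t)\,dt$ and interchanging sum and integral,
$$\sum_{j\ge1}\frac{\Varianz{Y_j}}{j^2} \le \int_0^\infty 2t\,H(t)\Bigl(\sum_{j>t}\frac1{j^2}\Bigr)\,dt \le C\int_0^\infty H(t)\,dt < \infty,$$
where I used $\sum_{j>t} j^{-2} \le C/\max(t,1)$. This is the step in which the precise form of the integral hypothesis is indispensable.

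Now fix $\alpha > 1$ and put $k_n \define \lfloor \alpha^n\rfloor$. Pairwise independence gives $\Varianz{T_{k_n}} = \sum_{j\le k_n}\Varianz{Y_j}$, so by Chebyshev and the geometric-subsequence estimate $\sum_n k_n^{-2}\sum_{j\le k_n}\Varianz{Y_j} = \sum_j \Varianz{Y_j}\sum_{n:\, k_n\ge j} k_n^{-2} \le C\sum_j \Varianz{Y_j}/j^2 < \infty$, Borel--Cantelli yields $(T_{k_n} - \Erwartung{T_{k_n}})/k_n \to 0$ a.s. For $k_n \le m < k_{n+1}$ I use the monotonicity of both $T$ and $\Erwartung{T}$ (valid since $Y_j\ge0$) to sandwich
$$\frac{T_{k_n} - \Erwartung{T_{k_{n+1}}}}{m} \le \frac{T_m - \Erwartung{T_m}}{m} \le \frac{T_{k_{n+1}} - \Erwartung{T_{k_n}}}{m}.$$
Splitting each bound into a centered part (which tends to $0$ because $k_{n+1}/k_n \to \alpha$ and the subsequence limit holds) and an increment $\Erwartung{T_{k_{n+1}}} - \Erwartung{T_{k_n}} = \sum_{k_n < j \le k_{n+1}}\Erwartung{Y_j} \le (k_{n+1}-k_n)M$, the latter contributes at most $M(k_{n+1}/k_n - 1) \to M(\alpha-1)$. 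Hence $\limsup_m \abs{(T_m - \Erwartung{T_m})/m} \le M(\alpha-1)$ a.s.; letting $\alpha \downarrow 1$ along a countable sequence gives the claim.

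The main obstacle is the non-identical distribution: unlike in Etemadi's setting, $\Erwartung{T_m}/m$ need not converge, so the sandwich cannot be anchored to a fixed limit and must instead directly control the oscillation of $T_m - \Erwartung{T_m}$ between consecutive subsequence indices. The uniform moment bound $M < \infty$ drawn from the integral hypothesis is precisely what forces the increment term to be $O(\alpha-1)$. The other delicate point is the Fubini interchange yielding the variance bound, which is where the integral condition does its essential work.
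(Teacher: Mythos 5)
Your proof is correct, but there is no proof in the paper to compare it against: the paper states this theorem only as cited background from \cite{Chandra-Goswami} and never proves it (the author even remarks that the proof in \cite{Chandra-Goswami} is itself only briefly outlined). What you have actually done is rebuild, in a self-contained way, the same Etemadi-style pipeline that the paper develops for its own results, plus the two verifications specific to the domination hypothesis. Concretely: after your reduction to nonnegative $X_n$ via $X_n^{\pm}$, your two estimates --- $\sum_j \mathsf{V}(Y_j)/j^2 \le C\int_0^\infty H(t)\,\mathrm dt<\infty$ by Tonelli, and $X_n-Y_n\to 0$ in $L^1$ (from $\mathsf{E}\bigl(X_n 1_{\{X_n>n\}}\bigr)\le nH(n)+\int_n^\infty H(t)\,\mathrm dt$) and a.s. (from $\sum_n H(n)<\infty$ and Borel--Cantelli) --- are exactly the hypotheses of the paper's Theorem \ref{Thm:PaarweiseUnabh} and Corollary \ref{Cor:ZuTheorem2}, so at that point you could simply have quoted those results and stopped. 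Your remaining subsequence-plus-sandwich argument reproves them instead, and in a genuinely simpler form than the paper's proof of Theorem \ref{Thm:KolmogoroffBedingung}: because your hypothesis yields the uniform bound $M=\sup_n\mathsf{E}|X_n|\le\int_0^\infty H(t)\,\mathrm dt<\infty$, the expectation increment over one geometric block is at most $(k_{n+1}-k_n)M$, so the oscillation of $(T_m-\mathsf{E}T_m)/m$ is $O(\alpha-1)$ directly. Under the paper's weaker assumption $\sup_n \mathsf{E}S_n/n<\infty$ no individual $\mathsf{E}X_j$ is bounded, the block increment cannot be controlled this way, and that is precisely why the paper needs the level-set bookkeeping $s(n)$, $T_{n,s}$, $k(n,s)^{\pm}$ of Definitions \ref{Def:m_n s_n} and \ref{Def:k_n}. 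In short: your argument is sound and complete modulo routine details; your uniform-domination shortcut buys a cleaner sandwich step, while the paper's heavier machinery buys theorems under weaker, merely Ces\`aro-type, mean assumptions.
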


However, Etemadi's arguments enable an elegant proof for large classes of non-identically distributed random variables.
First, let me introduce a few useful notions:

%\begin{definition}
%	A sequence of strictly positive real numbers $\Folge{a}{n}$ is said to be \textit{fragmentary} if and only if
%	$a_n \geq n$ for all $n$ big enough.
%\end{definition}

\begin{definition}
	A sequence $\Folge{X}{n}$ of (arbitrarily distributed) random variables is said to 
	\begin{enumerate}[i.]
		\item \emph{Satisfy the Kolmogoroff condition} if and only if each one of the $X_n's, n\in\N$ has finite variance $\Varianz X_n$ and
		\begin{equation}\label{eq:KolmogoroffBedingung}
			\sum_{n\in\mathbb N} \frac{\Varianz X_n}{n^2} < \infty.
		\end{equation}
		\item \emph{Be quasi uncorrelated} if and only if each one of the $X_n's, n\in\N$ has finite variance and there exists a positive constant $c$ such that $\Varianz S_n \le  c\sum_{k=1}^n \Varianz X_k$ for all $n\in\mathbb N$.
		\item \emph{Satisfy the SLLN} if and only if
		\begin{equation*}
			\lim_{n\to\infty} \frac{S_n-\Erwartung S_n}n = 0 \quad \text{a.s.}
		\end{equation*}
		Notice that if $a \define \lim_{n\to\infty} \Erwartung X_n$ exists, this is (by Cesàro summation) equivalent to the relation $\lim_{n\to\infty} \frac{1}{n} S_n = a$ a.s.
	\end{enumerate}
\end{definition}

\begin{theorem}[SLLN under the Kolmogoroff condition \eqref{eq:KolmogoroffBedingung}] \label{Thm:KolmogoroffBedingung}
	Let $\Folge{X}{n}$ be a sequence of non-negative, quasi uncorrelated random variables satisfying the  Kolmogoroff condition \eqref{eq:KolmogoroffBedingung} such that $A\define\sup_{n\in\mathbb N} \frac{\Erwartung{S_n}}n < \infty$.
	Then the sequence $(X_n)_{n\in\mathbb N}$ satisfies the SLLN.
\end{theorem}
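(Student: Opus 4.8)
The plan is to follow Etemadi's subsequence method, exploiting non-negativity to pass from a geometrically growing subsequence to the full sequence. Fix a real number $\alpha>1$ and set $\kna\define\lfloor\alpha^n\rfloor$. The first goal is to establish the SLLN along this subsequence, i.e.\ $\frac{S_{\kna}-\Erwartung{S_{\kna}}}{\kna}\to0$ a.s. Since the $X_n$ are quasi uncorrelated, $\Varianz{S_{\kna}}\le c\sum_{j=1}^{\kna}\Varianz{X_j}$, so Chebyshev's inequality gives, for every $\varepsilon>0$,
\[
\Prob{\left(\left|\frac{S_{\kna}-\Erwartung{S_{\kna}}}{\kna}\right|>\varepsilon\right)}\le\frac{c}{\varepsilon^2}\cdot\frac{\sum_{j=1}^{\kna}\Varianz{X_j}}{\kna^2}.
\]
The key computation is to sum this over $n$: after interchanging the order of summation and using that $\kna$ grows geometrically (so that $\sum_{n:\kna\ge j}\kna^{-2}$ is comparable to $j^{-2}$, up to a constant depending on $\alpha$), the double sum is bounded by a constant times $\sum_{j}\frac{\Varianz{X_j}}{j^2}$, which is finite by the Kolmogoroff condition. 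Borel--Cantelli then yields the claimed a.s.\ convergence along $(\kna)_n$.

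Next I would fill the gaps using non-negativity. For $\kna\le m\le k_{n+1}^{(\alpha)}$ the partial sums are monotone, $S_{\kna}\le S_m\le S_{k_{n+1}^{(\alpha)}}$, and likewise $\Erwartung{S_m}$ is non-decreasing; combining these with the subsequence limit and the fact that $k_{n+1}^{(\alpha)}/\kna\to\alpha$ should sandwich $\frac{S_m-\Erwartung{S_m}}{m}$ between quantities governed by the expectation increment $\Erwartung{S_{k_{n+1}^{(\alpha)}}}-\Erwartung{S_{\kna}}$ accumulated over one block. This is exactly where the hypothesis $A=\sup_n\frac{\Erwartung{S_n}}{n}<\infty$ must enter: it is what allows the block increment, once divided by $\kna$, to be estimated and kept under control as $n\to\infty$. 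Letting $\alpha\downarrow1$ should then collapse the $\alpha$-dependent error and give $\frac{S_m-\Erwartung{S_m}}{m}\to0$ a.s.

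I expect the main obstacle to be precisely this gap-filling step rather than the Chebyshev/Borel--Cantelli step, which is routine once the summation order is interchanged. The delicate point is that the crude monotonicity sandwich replaces $S_m$ by a block endpoint, and the resulting bound on $\frac{\Erwartung{S_{k_{n+1}^{(\alpha)}}}-\Erwartung{S_{\kna}}}{\kna}$ only tends to $0$ as $\alpha\downarrow1$ if this normalized increment is genuinely of order $\alpha-1$; a naive estimate using $A$ alone leaves a residual proportional to $A$ that does \emph{not} vanish. The crux is therefore to combine $A<\infty$ with the monotonicity of $\Erwartung{S_m}$ carefully enough that the block error really is $(\alpha-1)$-small, so that the limit $\alpha\downarrow1$ closes the argument.
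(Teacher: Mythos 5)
Your first step --- the SLLN along the geometric subsequence $\kna=\lfloor\alpha^n\rfloor$ via quasi-uncorrelatedness, Chebyshev's inequality, interchanging the order of summation, and Borel--Cantelli --- is correct, and it is essentially the paper's Lemma~\ref{lemma:Subsequences} and Lemma~\ref{lem:SLLN for the subsequences}. The gap is in your second step, and it is not a gap that can be closed ``by being careful'': with the plain subsequence $\lfloor\alpha^n\rfloor$, the residual you worry about in your last paragraph genuinely does not vanish. The obstruction is that $\Erwartung{S_n}/n$ need not converge (only $A=\sup_n \Erwartung{S_n}/n<\infty$ is assumed), and it can \emph{increase} by nearly $A$ inside a single geometric block: monotonicity of $m\mapsto\Erwartung{S_m}$ prevents the normalized mean from dropping by more than a factor $\alpha$ across a block, but it does nothing to prevent an upward jump of size close to $A$. (Take deterministic $X_m$ with $X_m=Am$ on a super-exponentially sparse set of indices and $X_m=0$ otherwise: all variances vanish, $\sup_n\Erwartung{S_n}/n<\infty$, yet $\Erwartung{S_m}/m$ swings from near $0$ to near $A$ within whatever block contains a spike.) Consequently the block increment $(\Erwartung{S_{k^{(\alpha)}_{n+1}}}-\Erwartung{S_{\kna}})/\kna$ is of order $A$ infinitely often, not of order $\alpha-1$, and your sandwich can only deliver $\limsup_n |S_n-\Erwartung{S_n}|/n \le CA$ for some constant $C$ --- a bound that survives the limit $\alpha\downarrow1$. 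Your closing assertion that the crux is ``to combine $A<\infty$ with the monotonicity of $\Erwartung{S_m}$ carefully enough that the block error really is $(\alpha-1)$-small'' is precisely the missing idea, and for this choice of subsequence it is false.

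What is needed (and what the paper does) is a finer, doubly-indexed family of subsequences. Fix $\varepsilon>0$; since $A<\infty$, the values $\Erwartung{S_k}/k$ all lie in $[0,A]$, which is covered by the \emph{finitely many} windows $[\varepsilon s,\varepsilon(s+1))$, $s\in\{0,\dots,L\}$ with $L=\lfloor A/\varepsilon\rfloor$. For each geometric block and each window $s$, let $k(n,s)^-$ and $k(n,s)^+$ be the smallest and largest index $k$ in that block whose normalized mean $\Erwartung{S_k}/k$ lies in window $s$ (Definition~\ref{Def:k_n} of the paper). Your Chebyshev/Borel--Cantelli argument applies verbatim to each of these finitely many families; finiteness of the number of windows --- i.e.\ exactly the hypothesis $A<\infty$ --- is what keeps the union of the exceptional null sets null. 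Then, for a given $n$, one sandwiches $S_n$ between $S_{k^-}$ and $S_{k^+}$ where $k^{\pm}=k(m(n),s(n))^{\pm}$ are chosen from the block \emph{and window of $n$ itself}: the inequalities $k^-\le n\le k^+$ and the factor-$\alpha$ estimates still hold, and in addition $|\Erwartung{S_{k^{\pm}}}/k^{\pm}-\Erwartung{S_n}/n|\le\varepsilon$ holds by construction, which is what replaces the uncontrollable block increment. This yields the two-sided bound $-\varepsilon-(1-\frac1\alpha)A\le\liminf_n\frac{S_n-\Erwartung{S_n}}{n}\le\limsup_n\frac{S_n-\Erwartung{S_n}}{n}\le(\alpha-1)A+\varepsilon$ almost surely (the paper's \eqref{Ergebnis}), and letting $\alpha\downarrow1$ and $\varepsilon\downarrow0$ along countable sequences finishes the proof.
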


\begin{remark}
	After having finished the proof of this Theorem and the proof of Theorem \ref{Thm:PaarweiseUnabh}, I became aware of the paper by Korchevsky \cite{Korchevsky}, where the above Theorem \ref{Thm:KolmogoroffBedingung} is formulated as Lemma 1. Korchevsky refers to the work of Chandra-Goswami \cite{Chandra-Goswami}. For completeness, I will give a detailed proof of Theorem \ref{Thm:KolmogoroffBedingung}, whereas the proof in \cite{Chandra-Goswami} is only briefly outlined.
\end{remark}

\begin{remark}\label{rmk:Arbitrary sequence}
	The condition $\sup_{n\in\N}\frac{\mathsf E S_n}n<\infty$ can be replaced by $\sup_{n\in\N}\frac{\mathsf E S_n}{b_n}<\infty$ for any unbounded, non-decreasing sequence of positive real numbers $b_n$. In that case, if the condition $\sum_{n=1}^\infty \frac{\mathsf V X_n}{n^2}<\infty$ is replaced by $\sum_{n=1}^\infty \frac{\mathsf V X_n}{b_n^2}<\infty$, then the sequence $(X_n)_{n\in\N}$ will satisfy $\lim_{n\to\infty}\frac{S_n-\mathsf ES_n}{b_n}=0$ a.s. During the proof I will make remarks that show how the proof has to be adapted in order to work for such $b_n$.
\end{remark}

\begin{remark}
	Of course Theorem \ref{Thm:KolmogoroffBedingung} is also true if \emph{non-negative} is replaced by \emph{a.s. non-negative}. For my purposes here, these two notions can be seen as synonymous.
\end{remark}

\begin{remark}\label{paarweise folgt quasi}
	If the $X_n$'s are pairwise uncorrelated, we have the equality $\Varianz S_n = \sum_{k=1}^n \Varianz X_k$ (Bienaymé) and thus the $X_n$ are, in particular, quasi uncorrelated.
\end{remark}

\begin{remark}
	If $X$ is a random variable, I use the notation $\essinf X$ for the essential infimum of $X$, i.e. the largest number $r$ such that $X\geq r$ a.s. Applying Theorem \ref{Thm:KolmogoroffBedingung} to the transformation $\tilde X_n \define X_n - \essinf X_n$ shows that the condition $\sup_{n\in\mathbb N} \frac{\Erwartung{S_n}}n < \infty$ can be replaced by the weaker condition 
	\begin{equation}\label{eq:SupCondition}
		\sup_{n\in\mathbb N}\frac{\Erwartung{S_n}-\sum_{k=1}^n \essinf X_k}n < \infty.
	\end{equation}
	Since for any numerical sequence $a_n\geq0$, $\sup_{n\in\N} a_n=\infty$ implies 
	\begin{equation*}
		\limsup_{n\to\infty} a_n=\lim_{n\to\infty}\sup_{k\geq n}a_k=\infty,
	\end{equation*}
	condition \eqref{eq:SupCondition} is equivalent to
	\begin{equation*}
		\limsup_{n\to\infty}\frac{\Erwartung{S_n}-\sum_{k=1}^n \essinf X_k}n < \infty.
	\end{equation*}
\end{remark}

\begin{remark}
	My proof of Theorem \ref{Thm:KolmogoroffBedingung} relies heavily on the proof by Csörgo et al. \cite{Csoergo}.
	In their paper, it is also shown that the condition of non-negativity cannot be removed. More precisely, in their Theorem 4, the authors construct a sequence of pairwise uncorrelated random variables $X_n$ satisfying the Kolmogoroff condition \eqref{eq:KolmogoroffBedingung} and the condition
	\begin{equation*}
		\lim_{n\to\infty}\frac{\sum_{k=1}^n \Erwartung{}\abs{X_k-\Erwartung X_k}}n=0
	\end{equation*}
	and
	\begin{equation*}
		\limsup_{n\to\infty}\frac{\abs{S_n-\Erwartung S_n}}n=\infty.
	\end{equation*}
	If we require the $X_n$ to be pairwise independent instead of just pairwise uncorrelated, the non-negativity assumption can be dropped:
\end{remark}

\begin{corollary}\label{Cor:PaarweiseUnabhKolmogoroff}
	Let $\Folge{X}{n}$ be a sequence of pairwise independent random variables satisfying the  Kolmogoroff condition \eqref{eq:KolmogoroffBedingung} and suppose further that 
	\begin{equation}\label{eq:AuxilliaryCondition}
		\limsup_{n\to\infty}\frac{\sum_{k=1}^n \Erwartung{}\abs{X_k-\Erwartung X_k}}n < \infty.
	\end{equation}
	Then the sequence $\Folge Xn$ satisfies the SLLN.
\end{corollary}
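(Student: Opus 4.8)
The plan is to deduce the corollary from Theorem \ref{Thm:KolmogoroffBedingung} by a positive/negative-part decomposition that manufactures two non-negative sequences to which the theorem directly applies. I would set $Y_n\define X_n-\Erwartung{X_n}$ and split $Y_n=Y_n^+-Y_n^-$, where $Y_n^+\define\max(Y_n,0)$ and $Y_n^-\define\max(-Y_n,0)$. Both auxiliary sequences $\Folge{Y^+}{n}$ and $\Folge{Y^-}{n}$ are non-negative, and since $0\le Y_n^\pm\le\abs{Y_n}$ almost surely, I obtain the two bounds $\Erwartung{Y_n^\pm}\le\Erwartung{\abs{Y_n}}=\Erwartung{\abs{X_n-\Erwartung{X_n}}}$ and $\Varianz{Y_n^\pm}\le\Erwartung{(Y_n^\pm)^2}\le\Erwartung{Y_n^2}=\Varianz{X_n}$.

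The second step is to verify that $\Folge{Y^+}{n}$ and $\Folge{Y^-}{n}$ each satisfy every hypothesis of Theorem \ref{Thm:KolmogoroffBedingung}. Non-negativity holds by construction. Since $Y_n^+$ and $Y_n^-$ are Borel functions of the single random variable $X_n$, the pairwise independence of the $X_n$ is inherited by each of the two sequences, and by Remark \ref{paarweise folgt quasi} pairwise independence makes them quasi uncorrelated. The Kolmogoroff condition \eqref{eq:KolmogoroffBedingung} for $\Folge{Y^\pm}{n}$ follows at once from $\Varianz{Y_n^\pm}\le\Varianz{X_n}$. For the remaining hypothesis, the bound $\Erwartung{Y_k^\pm}\le\Erwartung{\abs{X_k-\Erwartung{X_k}}}$ gives
\begin{equation*}
	\frac{\Erwartung{\sum_{k=1}^n Y_k^\pm}}{n}\le\frac{\sum_{k=1}^n\Erwartung{\abs{X_k-\Erwartung{X_k}}}}{n},
\end{equation*}
the right-hand side of which has finite $\limsup$ by assumption \eqref{eq:AuxilliaryCondition}; since the left-hand side is a non-negative sequence, the earlier observation that $\sup=\infty$ forces $\limsup=\infty$ lets me conclude $\sup_{n\in\N}\frac{1}{n}\Erwartung{\sum_{k=1}^n Y_k^\pm}<\infty$, which is precisely the finiteness condition on $A$ required in Theorem \ref{Thm:KolmogoroffBedingung} (with $Y_k^\pm$ playing the role of $X_k$).

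Applying Theorem \ref{Thm:KolmogoroffBedingung} to the two non-negative sequences then yields, on a common event of full probability,
\begin{equation*}
	\lim_{n\to\infty}\frac{1}{n}\left(\sum_{k=1}^n Y_k^\pm-\Erwartung{\sum_{k=1}^n Y_k^\pm}\right)=0\quad\text{a.s.}
\end{equation*}
for both signs. Subtracting the two limits and using $Y_k^+-Y_k^-=Y_k=X_k-\Erwartung{X_k}$ together with $\Erwartung{Y_k}=0$ (so that the two expectation terms cancel) collapses the difference exactly to $\frac{1}{n}(S_n-\Erwartung{S_n})$, giving $\lim_{n\to\infty}\frac{S_n-\Erwartung{S_n}}{n}=0$ a.s., which is the SLLN. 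I expect the work to be essentially bookkeeping rather than conceptual; the two points needing a little care are the claim that the positive and negative parts preserve pairwise independence (a measurable-function argument applied to each pair $X_j,X_k$) and the upgrade from the $\limsup$ hypothesis \eqref{eq:AuxilliaryCondition} to the $\sup$ form demanded by the theorem.
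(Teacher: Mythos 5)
Your proposal is correct and follows essentially the same route as the paper's own proof: center the variables, split into positive and negative parts, verify that each part is non-negative, pairwise independent (hence quasi uncorrelated), satisfies the Kolmogoroff condition via $\Varianz{Y_n^\pm}\le\Varianz{X_n}$, and satisfies the $\sup$-condition via the $\limsup$-to-$\sup$ upgrade, then apply Theorem \ref{Thm:KolmogoroffBedingung} to both parts and recombine. If anything, your write-up is slightly cleaner on the bookkeeping: you consistently use $Y_n=Y_n^+-Y_n^-$ and subtract the two limits, whereas the paper's text writes the decomposition as a sum while defining $X_n^-=-\min\{X_n,0\}$, a small sign inconsistency that your version avoids.
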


\begin{remark}\label{NichtNegativitaet}
	Thanks to linearity of $\mathsf E$, the non-negativity condition in Theorem \ref{Thm:KolmogoroffBedingung} can be replaced by a.s. uniform boundedness of each $X_n$ (or even of $X_n-\Erwartung X_n$) from below or above. For example, if each $X_n$ is bounded from below by the real number $r$, then Theorem \ref{Thm:KolmogoroffBedingung} can be applied to $\tilde{X}_n\define X_n-r$. In fact, one can always try to use the transformation $\tilde{X}_n\define X_n-\essinf_{n\in\N} X_n$ and check if that transformation satiesfies all conditions of Theorem \ref{Thm:KolmogoroffBedingung}.
\end{remark}

\begin{remark}
	Kolmogoroff's classical SLLN is very similar to Corollary \ref{Cor:PaarweiseUnabhKolmogoroff}, except that pairwise independence is replaced by independence but the auxilliary condition \eqref{eq:AuxilliaryCondition} is dropped. However, as is proven in \cite{Csoergo}, Theorem 3, there is a sequence of pairwise independent random variables $\Folge Xn$ satisfying the Kolmogoroff condition \eqref{eq:KolmogoroffBedingung} such that the SLLN is not satisfied. Hence, an auxilliary condition like \eqref{eq:AuxilliaryCondition} in Corollary \ref{Cor:PaarweiseUnabhKolmogoroff} is necessary.
\end{remark}

\medskip
We can try to relax the Kolmogoroff condition \eqref{eq:KolmogoroffBedingung} for the $X_n$ using \emph{truncation arguments}. For this, however, the condition that all $X_n$'s are pairwise uncorrelated has to be replaced by a condition of pairwise independence. The reason for this is that the truncations of the $X_n$'s are still pairwise independent if the $X_n$'s are pairwise independent, while the truncations need not be pairwise uncorrelated if the $X_n$'s are only pairwise uncorrelated.

\medskip
More precisely, the truncations of $X_n$ that I will look at are $Y_n\define X_n\cdot 1_{\set{X_n\le n}}$, where $1_{\set{X_n\le n}}$ is the indicator function of the event $\set{X_n\le n}$. 
\smallskip

\begin{remark}\label{rem:Example which doesn't satisfy WLLN}
	It should be noted that the truncated variables $Y_n, n\in\N$, do not have to satisfy the Kolmogoroff condition \eqref{eq:KolmogoroffBedingung}! It depends on the $X_n$'s. For example, if the $X_n$'s are random variables with $\Prob(X_n = n)=\Prob(X_n=0)=\frac12$, then $\Varianz(Y_n)=\Varianz(X_n)=\frac{n^2}4$, which is too large for the Kolmogoroff condition \eqref{eq:KolmogoroffBedingung} to hold. In this case, however, it is not surprising, because the sequence $(X_n)_{n\in\N}$ also does not satisfy the SLLN. Interestingly, the sequence $(X_n)_{n\in\N}$ does not even satisfy the Weak Law of Large Numbers (WLLN). Indeed, let $\tilde X_n = X_n - \Erwartung X_n$. Then $S_n-\Erwartung S_n = \sum_{k=1}^n \tilde X_k$ and $\mathsf P\left(\tilde X_n = -\frac n2\right)=\mathsf P\left(\tilde X_n = \frac n2\right)=\frac12$, so that for $n\geq 2$,
	\begin{equation*}
		\Prob{} \left(S_n-\Erwartung S_n\geq \frac{n}2\right) = \Prob{}\left(\sum_{k=1}^n \tilde X_k\geq\frac n2\right)=\frac{\Prob{} \left(\sum_{k=1}^{n-1}  \tilde X_k \geq0\right)+\Prob{} \left(\sum_{k=1}^{n-1} \tilde X_k \geq n\right)}2\geq\frac14,
	\end{equation*}
	where the last inequality is true by the symmetry of $\tilde X_k$, i.e. $\Prob{} \left(\sum_{k=1}^{n-1}  \tilde X_k \geq0\right)\geq\frac12$.
\end{remark}

\begin{remark}
	Since the SLLN implies the WLLN, the sequence of random variables from Remark \ref{rem:Example which doesn't satisfy WLLN} satisfy neither the SLLN nor the WLLN. For completeness, I refer to examples 15.3 and 15.4 in \cite{Stoyanov} for sequences of random variables that satisfy the WLLN but not the SLLN.
\end{remark}

\begin{theorem}[SLLN for random variables with the Kolmogoroff condition for the truncations]\label{Thm:PaarweiseUnabh}
	Let $\Folge{X}{n}$ be a sequence of non-negative, pairwise independent random variables such that the truncated sequence $(Y_n)_{n\in\N}$, where $Y_n\define X_n \cdot 1_{\set{X_n\le n}}$, satisfies the Kolmogoroff condition \eqref{eq:KolmogoroffBedingung} as well as \eqref{eq:AuxilliaryCondition} and such that $X_n-Y_n$ goes to $0$ in $L^1$-sense and a.s. as $n\to\infty$. Then the sequence $(X_n)_{n\in\N}$ satisfies the SLLN.
\end{theorem}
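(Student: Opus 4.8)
The plan is to run the classical Etemadi truncation scheme: reduce the whole statement to the truncated variables $\Folge{Y}{n}$ and then feed these into Theorem \ref{Thm:KolmogoroffBedingung}. Set $T_n \define Y_1 + \dots + Y_n$. First I would carry out the reduction. Since $X_n - Y_n = X_n \cdot 1_{\{X_n > n\}}$ takes values in $\{0\} \cup (n, \infty)$, the almost sure convergence $X_n - Y_n \to 0$ forces $X_n - Y_n = 0$ for all large $n$, i.e. $X_n = Y_n$ eventually a.s.; hence $S_n - T_n = \sum_{k=1}^n (X_k - Y_k)$ is eventually constant in $n$ almost surely and $\frac{S_n - T_n}{n} \to 0$ a.s. (Because the $X_n$ are pairwise independent, the events $\{X_n > n\}$ are pairwise independent, so the second Borel--Cantelli lemma even yields $\sum_n \Prob{(X_n > n)} < \infty$, a fact I expect to need for the excursion control below.) The $L^1$-convergence gives $\Erwartung{(X_n - Y_n)} \to 0$, so by Cesàro summation $\frac{\Erwartung{S_n} - \Erwartung{T_n}}{n} \to 0$. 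Subtracting, $\frac{(S_n - \Erwartung{S_n}) - (T_n - \Erwartung{T_n})}{n} \to 0$ a.s., so it suffices to establish the SLLN for $\Folge{Y}{n}$.

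Next I would check that $\Folge{Y}{n}$ fits the setting of Theorem \ref{Thm:KolmogoroffBedingung}: the $Y_n$ are non-negative, and since pairwise independence implies pairwise uncorrelatedness, Bienaymé's identity (Remark \ref{paarweise folgt quasi}) makes them quasi uncorrelated with $c = 1$, while the Kolmogoroff condition \eqref{eq:KolmogoroffBedingung} for $\Folge{Y}{n}$ is assumed. The mechanism of that theorem is exactly what I would exploit: along a geometric subsequence $k_n = \lfloor \alpha^n \rfloor$ with $\alpha > 1$, Chebyshev's inequality combined with Bienaymé and \eqref{eq:KolmogoroffBedingung} makes $\sum_n \Prob{(\lvert T_{k_n} - \Erwartung{T_{k_n}} \rvert > \varepsilon k_n)}$ summable, so Borel--Cantelli gives $\frac{T_{k_n} - \Erwartung{T_{k_n}}}{k_n} \to 0$ a.s.; non-negativity (monotonicity of $m \mapsto T_m$) fills the gaps between consecutive $k_n$, and letting $\alpha \downarrow 1$ completes the passage to the full sequence.

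The step I expect to be the main obstacle is the mean-growth hypothesis $A \define \sup_n \frac{\Erwartung{T_n}}{n} < \infty$ demanded by Theorem \ref{Thm:KolmogoroffBedingung}. In contrast with the identically distributed case, the truncations of non-identically distributed variables can have means growing linearly (for instance $\Erwartung{Y_n} \asymp n$ while $\Varianz{Y_n}$ stays small enough for \eqref{eq:KolmogoroffBedingung}), so this supremum need not be finite, and then the monotone gap-filling above becomes too lossy by itself. Here non-negativity and pairwise independence must be used more carefully. I would first pass to $Y_n - \essinf Y_n$, which leaves $T_n - \Erwartung{T_n}$ unchanged (it is a deterministic shift) while replacing the mean condition by the weaker \eqref{eq:SupCondition}, and then absorb the remaining rare large excursions via the summability $\sum_n \Prob{(X_n > n)} < \infty$ from the first paragraph, so that almost surely only finitely many excursions occur and their contribution to the gaps is eventually negligible. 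Making this quantitative --- showing $\max_{k_n \le m \le k_{n+1}} \lvert T_m - \Erwartung{T_m} \rvert = o(k_n)$ a.s. even when $\Erwartung{T_m}$ grows --- is the technical heart of the argument, and the point where the hypotheses on $X_n - Y_n$ genuinely earn their keep.
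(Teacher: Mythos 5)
Your first paragraph (reduction to the truncated sequence via eventual equality $X_n=Y_n$ a.s.\ and Ces\`aro for the means) is exactly the paper's route --- its Lemmas \ref{lem:ErwartungswerteSindUngefaehrGleich} and \ref{T_n reicht} --- and your second paragraph is just Theorem \ref{Thm:KolmogoroffBedingung} invoked as a black box. The problem is the third paragraph. You have correctly identified that Theorem \ref{Thm:KolmogoroffBedingung} requires $\sup_{n}\mathsf{E}T_n/n<\infty$, and that this does \emph{not} follow from the hypotheses of Theorem \ref{Thm:PaarweiseUnabh}: take $X_n=Y_n=\frac n2+\varepsilon_n$ with $\varepsilon_n$ independent fair $\pm1$ signs; all hypotheses hold (bounded variances, no truncation at all), yet $\mathsf{E}T_n/n\sim n/4$. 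But neither of your proposed repairs closes this gap, and you concede the quantitative heart is missing. The $\essinf$ shift does not help: condition \eqref{eq:SupCondition} can fail as well, e.g.\ modify the example so that $Y_n=0$ with probability $n^{-2}$ and $Y_n=\frac n2+\varepsilon_n$ otherwise; then $\essinf Y_n=0$ and $\mathsf{E}Y_n-\essinf Y_n\sim n/2$, while all hypotheses of Theorem \ref{Thm:PaarweiseUnabh} (and the Kolmogoroff condition) still hold. The summability $\sum_n\mathsf{P}(X_n>n)<\infty$ is likewise of no use: in these problematic examples no truncation occurs, every term is zero, and the bound says nothing about the growth of $\mathsf{E}T_n$. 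So, as written, your argument proves Theorem \ref{Thm:PaarweiseUnabh} only under the additional hypothesis $\sup_n\mathsf{E}T_n/n<\infty$ (equivalently $\sup_n\mathsf{E}S_n/n<\infty$, by your own Ces\`aro step).

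You should know, however, that the paper's own proof has exactly the same hole and does not acknowledge it: it applies Theorem \ref{Thm:KolmogoroffBedingung} to $(Y_n)_{n\in\mathbb N}$ citing only non-negativity, pairwise independence and the Kolmogoroff condition, and never verifies the hypothesis $A<\infty$, which the proof of Theorem \ref{Thm:KolmogoroffBedingung} genuinely uses (it is the bound $B$ appearing in \eqref{letzte Ungleichung}--\eqref{Ergebnis}, and the final squeeze needs $B<\infty$). So your diagnosis is a real observation about the paper, not a defect peculiar to your attempt; what is missing is a proof (or a counterexample) for the case of linearly growing truncated means, which neither you nor the paper supplies. The cheap repair is to add $\sup_n\mathsf{E}S_n/n<\infty$ to the statement of Theorem \ref{Thm:PaarweiseUnabh}; this costs nothing for the intended application, since for identically distributed integrable $X_n$ one has $\mathsf{E}Y_n\le\mathsf{E}X_1$, so Theorem \ref{Prop:Identical Distribution} and Corollary \ref{Cor:ZuTheorem2} still yield Etemadi's theorem. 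Whether Theorem \ref{Thm:PaarweiseUnabh} is true exactly as stated remains unsettled by both arguments.
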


\begin{remark}
	Remark \ref{NichtNegativitaet} also applies to Theorem \ref{Thm:PaarweiseUnabh}.
\end{remark}

\smallskip
Of course, we can slightly modify the assumptions to avoid non-negativity. The following two standard notations, $X_n^+\define \max\{X_n,0\}$ and $X_n^-\define-\min\{X_n,0\}$, will be used.

\begin{corollary}\label{Cor:ZuTheorem2}
	Let $\Folge{X}{n}$ be a sequence of random variables such that both sequences $\Folge{X^+}{n}$ and $\Folge{X^-}n$ satisfy the conditions of Theorem \ref{Thm:PaarweiseUnabh}. Then the sequence $(X_n)_{n\in\N}$ satisfies the SLLN.
\end{corollary}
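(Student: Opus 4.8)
The plan is to reduce the statement to a double application of Theorem \ref{Thm:PaarweiseUnabh}, exploiting the elementary identity $X_n = X_n^+ - X_n^-$.

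First I would introduce the partial sums $S_n^+ \define \sum_{k=1}^n X_k^+$ and $S_n^- \define \sum_{k=1}^n X_k^-$, so that $S_n = S_n^+ - S_n^-$ for every $n$. By assumption both $\Folge{X^+}{n}$ and $\Folge{X^-}{n}$ meet the hypotheses of Theorem \ref{Thm:PaarweiseUnabh}, so the theorem applies to each of them and yields
\[
\lim_{n\to\infty} \frac{S_n^+ - \Erwartung{S_n^+}}{n} = 0 \quad\text{a.s.} \qquad\text{and}\qquad \lim_{n\to\infty} \frac{S_n^- - \Erwartung{S_n^-}}{n} = 0 \quad\text{a.s.}
\]
Each limit holds on an event of full probability; since the intersection of two such events again has probability one, both limits hold simultaneously on a common almost-sure event.

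On that event I would then subtract the two relations. For this I must know that $\Erwartung{S_n} = \Erwartung{S_n^+} - \Erwartung{S_n^-}$ is a legitimate (finite) decomposition, with no $\infty-\infty$ ambiguity. This is the one point deserving care, and it is guaranteed by the hypotheses of Theorem \ref{Thm:PaarweiseUnabh}: the Kolmogoroff condition on the truncations forces each $Y_n^\pm$ to have finite expectation (finite variance implies $L^1$), while the $L^1$-convergence $X_n^\pm - Y_n^\pm \to 0$ forces the discarded tails $\Erwartung{X_n^\pm 1_{\{X_n^\pm > n\}}}$ to be finite as well, so that $\Erwartung{X_n^+}$ and $\Erwartung{X_n^-}$, and hence $\Erwartung{S_n^+}$ and $\Erwartung{S_n^-}$, are all finite. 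Writing
\[
\frac{S_n - \Erwartung{S_n}}{n} = \frac{S_n^+ - \Erwartung{S_n^+}}{n} - \frac{S_n^- - \Erwartung{S_n^-}}{n}
\]
and letting $n\to\infty$ on the common almost-sure event then gives $\lim_{n\to\infty}\frac{S_n - \Erwartung{S_n}}{n} = 0$ a.s., which is exactly the SLLN for $\Folge{X}{n}$. Apart from verifying the finiteness of these expectations, I expect no genuine obstacle: the corollary follows immediately from the additivity of almost-sure limits.
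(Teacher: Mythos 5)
Your proposal is correct and takes essentially the same route as the paper's own proof: decompose $X_n = X_n^+ - X_n^-$, apply Theorem \ref{Thm:PaarweiseUnabh} separately to $(X_n^+)_{n\in\mathbb N}$ and $(X_n^-)_{n\in\mathbb N}$, and combine the two almost-sure limits on their common full-probability event. Your extra care is welcome but not a different method: the finiteness check (no $\infty-\infty$) is passed over silently in the paper, and your subtraction is in fact the sign-correct way to combine the limits given the definition $X_n^-\define-\min\{X_n,0\}$, whereas the paper speaks of ``adding'' the two relations --- harmless either way, since both limits are zero.
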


\smallskip
Finally, I want to prove a (well-known) statement which shows that Etemadi's result follows from Corollary \ref{Cor:ZuTheorem2} since identically distributed random variables \enquote{behave nicely}:

\begin{theorem}\label{Prop:Identical Distribution}
	Let $\Folge Xn$ be a sequence of non-negative, identically distributed, integrable random variables. Then the truncated sequence $(Y_n)_{n\in\N}$, defined by $Y_n\define X_n\cdot1_{\set{X_n\le n}}$, satisfies the Kolmogoroff condition, and, as $n\to\infty$, $X_n- Y_n\to0$ both in $L^1$-sense and a.s.
\end{theorem}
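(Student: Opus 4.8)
The plan is to establish the two assertions independently, writing $X$ for a random variable sharing the common law of the $X_n$, so that $\Erwartung{g(X_n)}=\Erwartung{g(X)}$ and $\Prob{(X_n\in B)}=\Prob{(X\in B)}$ for all $n$; note that identical distribution and integrability are the only hypotheses I will use.

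For the Kolmogoroff condition, I would first discard the mean by bounding the variance with the second moment, $\Varianz{Y_n}\le\Erwartung{Y_n^2}=\Erwartung{X^2 1_{\{X\le n\}}}$, so that it suffices to show $\sum_{n\ge 1}\frac1{n^2}\Erwartung{X^2 1_{\{X\le n\}}}<\infty$. Since every term is non-negative, Tonelli's theorem lets me interchange sum and expectation and rewrite the left-hand side as $\Erwartung{\bigl(X^2\sum_{n\ge\max(1,\lceil X\rceil)} n^{-2}\bigr)}$, the lower index being the smallest $n\in\N$ with $X\le n$. The decisive elementary estimate is the tail bound $\sum_{n\ge m} n^{-2}\le 2/m$ for every integer $m\ge 1$, proved by comparison with $\int_m^\infty x^{-2}\,\mathrm dx$. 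On $\{X\ge 1\}$ this gives $X^2\sum_{n\ge\lceil X\rceil}n^{-2}\le 2X^2/\lceil X\rceil\le 2X$, while on $\{X<1\}$ the inner quantity is at most $\frac{\pi^2}6 X^2\le\frac{\pi^2}6$; integrating over $\Omega$ yields the bound $\frac{\pi^2}6+2\Erwartung X<\infty$, which is exactly where integrability enters.

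For the convergence, observe $X_n-Y_n=X_n 1_{\{X_n>n\}}\ge 0$. The $L^1$ statement is immediate: $\Erwartung{\abs{X_n-Y_n}}=\Erwartung{X 1_{\{X>n\}}}$, and since $X1_{\{X>n\}}\to 0$ pointwise (because $X<\infty$ a.s.) and is dominated by the integrable $X$, dominated convergence forces this to $0$. For the almost-sure statement I would invoke the first Borel--Cantelli lemma, which needs no independence: it suffices that $\sum_n\Prob{(X_n>n)}<\infty$. By identical distribution $\Prob{(X_n>n)}=\Prob{(X>n)}$, and from the layer-cake identity $\Erwartung X=\int_0^\infty\Prob{(X>t)}\,\mathrm dt$ together with the monotonicity of $t\mapsto\Prob{(X>t)}$ one gets $\sum_{n\ge 1}\Prob{(X>n)}\le\Erwartung X<\infty$. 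Borel--Cantelli then yields $\Prob{(X_n>n\ \text{infinitely often})}=0$, i.e.\ for almost every $\omega$ one has $X_n(\omega)-Y_n(\omega)=0$ for all large $n$, giving a.s.\ convergence to $0$.

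None of the steps is deep; the only places demanding care are getting the correct lower summation index $\max(1,\lceil X\rceil)$ in the Tonelli step and using the sharp tail bound $\sum_{n\ge m}n^{-2}\le 2/m$, which is precisely what is needed to reduce the factor $X^2$ to the integrable quantity $X$. I expect this variance estimate to be the main (though still routine) obstacle, with the convergence part being a direct application of dominated convergence and Borel--Cantelli.
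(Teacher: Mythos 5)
Your proposal is correct and follows essentially the same route as the paper: bound $\Varianz Y_n$ by $\Erwartung(Y_n^2)$, interchange summation and expectation, and use the tail estimate $\sum_{n\ge m}n^{-2}=O(1/m)$ to reduce the factor $X^2$ on the truncation event to the integrable quantity $X$, with the $L^1$ part by dominated convergence and the a.s.\ part by Borel--Cantelli via $\sum_n\Prob(X>n)\le\Erwartung X$. The only cosmetic difference is that you perform the interchange directly with Tonelli and a random lower summation index $\max(1,\lceil X\rceil)$, whereas the paper decomposes $\{X_1\le j\}$ into unit layers $\{k<X_1\le k+1\}$ and swaps two discrete sums --- the same argument in slightly different clothing.
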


If we are given any identically distributed, pairwise independent random variables $X_1,X_2,\dots$ with $\Erwartung{}\abs{X_1}<\infty$, we can apply Theorem \ref{Prop:Identical Distribution} to each one of the sequences $(X_n^+)_{n\in\N}$ and $(X_n^-)_{n\in\N}$ to see that all conditions of Corollary \ref{Cor:ZuTheorem2} are satisfied. As a consequence, we obtain the conclusion of Etemadi's Theorem.

\section{Examples}
In order to illustrate my results I will now give two examples.

\begin{example**}\label{example:Cosine}
Let $X$ be a (continuous) uniformly distributed random variable over the interval $[-1,1]$. Define the sequence 
\begin{equation}\label{eq:Definition of the X_n in first example}
	X_n\define\cos(2\pi n X) \text{ for $n\in\N$.}
\end{equation}
\begin{claim}
	The following three statements are true:
	\begin{enumerate}[1)]
		\item\label{enum:Uncorrelated} The sequence $(X_n)_{n\in\N}$ is pairwise uncorrelated.
		\item\label{enum:Dependent} Each pair $(X_i, X_j)$ for $i,j\in\N$ is not independent.
		\item\label{enum:SLLN} The sequence $(X_n)_{n\in\N}$ satisfies the SLLN.
	\end{enumerate}
\end{claim}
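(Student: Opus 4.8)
My plan is to handle the three assertions by separate methods. The computational backbone is the identity $\Erwartung{\cos(2\pi mX)}=\frac12\int_{-1}^1\cos(2\pi mx)\,dx=0$, valid for every nonzero integer $m$ because $X$ is uniform on an interval of length $2$ and $2\pi mX$ therefore runs through a whole number of periods. For \ref{enum:Uncorrelated} I would first note $\Erwartung{X_n}=0$, then expand $X_iX_j$ with the product-to-sum formula $\cos a\cos b=\tfrac12(\cos(a-b)+\cos(a+b))$; for $i\neq j$ both frequencies $i-j$ and $i+j$ are nonzero integers, so $\Erwartung{X_iX_j}=0=\operatorname{Cov}(X_i,X_j)$. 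The same expansion of $\cos^2(2\pi nX)=\tfrac12(1+\cos(4\pi nX))$ gives $\Varianz X_n=\Erwartung{X_n^2}=\tfrac12$, which I will reuse below.

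I expect \ref{enum:Dependent} to be the genuinely delicate point, since the obvious dependence tests are inconclusive: the signs of $X_i,X_j$ are often independent, and one computes $\operatorname{Cov}(X_i^2,X_j^2)=0$, so no fixed low-order moment exposes the dependence uniformly in $(i,j)$. I would instead argue through supports. Each marginal $X_n=\cos(\text{uniform angle})$ follows the arcsine law on $[-1,1]$, which is absolutely continuous; hence, if $X_i$ and $X_j$ were independent, their joint law would be a product of absolutely continuous measures, itself absolutely continuous on $\mathbb R^2$ and so assigning mass zero to every planar Lebesgue-null set. But the joint law is carried by the image of the real-analytic map $x\mapsto(\cos2\pi ix,\cos2\pi jx)$, a curve of planar measure zero, on which it puts full mass---a contradiction. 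This disposes of all $i\neq j$; the case $i=j$ is trivial because then $X_i=X_j$ is non-degenerate.

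For \ref{enum:SLLN} I would simply verify the hypotheses of Theorem \ref{Thm:KolmogoroffBedingung}. Part \ref{enum:Uncorrelated} together with Remark \ref{paarweise folgt quasi} makes $(X_n)_{n\in\mathbb N}$ quasi uncorrelated, and $\Varianz X_n=\tfrac12$ gives $\sum_n\Varianz X_n/n^2=\tfrac12\sum_n n^{-2}<\infty$, the Kolmogoroff condition. The lone missing hypothesis is non-negativity, which I would remove via Remark \ref{NichtNegativitaet}: since $X_n\geq-1$, I apply the Theorem to $\tilde X_n\define X_n+1\geq0$, which inherits the Kolmogoroff condition and quasi uncorrelatedness (variances and covariances are translation invariant) and satisfies $\sup_n\Erwartung{\tilde S_n}/n=1<\infty$. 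The conclusion $(\tilde S_n-\Erwartung{\tilde S_n})/n\to0$ a.s.\ is, because $\tilde S_n-\Erwartung{\tilde S_n}=S_n-\Erwartung{S_n}$ and $\Erwartung{S_n}=0$, exactly the desired $S_n/n\to0$ a.s.
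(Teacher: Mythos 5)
Your proposal is correct, and for statements 1) and 3) it is essentially the paper's own argument: the same product-to-sum computation establishes pairwise uncorrelatedness, and the same shift $\tilde X_n \define 1+X_n$ restores non-negativity so that Theorem \ref{Thm:KolmogoroffBedingung} applies (the paper deduces the Kolmogoroff condition from $\tilde X_n\in[0,2]$ rather than computing $\Varianz X_n=\tfrac12$, an immaterial difference). Where you genuinely diverge is statement 2). The paper's proof is quantitative and constructive: on the event $\{X_i>1-\varepsilon\}$ the underlying variable $X$ is pinned into a union $D$ of roughly $2i$ short intervals, hence $X_j$ is confined to $\cos(2\pi j D)$, a set of Lebesgue measure smaller than $2$; since the law of $X_j$ charges every subinterval of $[-1,1]$, conditioning on $\{X_i>1-\varepsilon\}$ visibly changes the law of $X_j$, which exhibits explicit events violating the product formula. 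Your argument is soft: the marginals are arcsine, hence absolutely continuous, so under independence the joint law of $(X_i,X_j)$ would be absolutely continuous with respect to planar Lebesgue measure, yet it is carried by the Lipschitz curve $x\mapsto(\cos 2\pi i x,\cos 2\pi j x)$, which is planar-null --- contradiction. Your route is shorter, avoids the $\delta(\varepsilon)$ bookkeeping (which in the paper is in fact slightly delicate, since the measure of $\cos(2\pi j D)$ grows with $j$, so the smallness threshold for $\varepsilon$ must depend on both $i$ and $j$), and generalizes immediately to any pair of absolutely continuous random variables that are a.s.\ functions of one common random variable; the price is that it is non-constructive and rests on two measure-theoretic facts you should state explicitly --- that the Lipschitz image of a segment is Lebesgue-null in the plane, and that a product of absolutely continuous laws is absolutely continuous (Fubini). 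Both proofs are complete; yours trades the paper's elementary estimates for a cleaner structural reason why independence is impossible.
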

\begin{remark}
	As will be seen in the proof, my Theorem \ref{Thm:KolmogoroffBedingung} can be used to prove that $(X_n)_{n\in\N}$ satisfies the SLLN. Notice that, because of statement \ref{enum:Dependent}, the original Theorem in \cite{Csoergo} cannot be applied.
\end{remark}
\begin{proof}[Proof of the statements]
	Let me start with the most important statement, \ref{enum:SLLN}. Consider $\tilde{X_n}\define1+X_n$. Then (see \eqref{eq:Definition of the X_n in first example}), $\mathsf E(\tilde{X_n})=1+\mathsf E(X_n)=1$. Hence $\sup_{n\in\mathbb N}\frac{\mathsf E(\tilde{X_1}+\dots+\tilde{X_n})}n=1<\infty$. Additionally, all of the random variables $\tilde{X_n}$ are in the interval $[0,2]$, which implies that the Kolmogoroff condition is satisfied. Statement \ref{enum:Uncorrelated} guarantees that Theorem \ref{Thm:KolmogoroffBedingung} can be applied to conclude that the sequence $(X_n)_{n\in\N}$ satisfies the SLLN.
	
	\smallskip
	To prove statement \ref{enum:Uncorrelated}, we need to show that $\mathsf E(X_i X_j) =\mathsf E(X_i)\mathsf E(X_j)$ for all $i\neq j$.
	This is true by the following two simple computations:
	\begin{align*}\label{eq:Expected value for Cosine example}
		\mathsf E(X_i) = \int_{-1}^1 \cos(2\pi i x)\,\mathrm dx =\ &0 = \mathsf E(X_j),
		\\
		\mathsf E(X_i X_j)=\int_{-1}^1 \cos(2\pi i x)\cos(2\pi j x)\,\mathrm dx =\ &0.
	\end{align*}
	The last equality follows from the elementary trigonometric identity
	\begin{equation*}
		\cos(2\pi i x)\cos(2\pi j x)=\frac{\cos (2 \pi  (i-j) x)+\cos (2 \pi  (i+j) x)}{2}.
	\end{equation*}
	
	To prove statement \ref{enum:Dependent}, let $\varepsilon>0$ and suppose that $X_i>1-\varepsilon$ for some $i\in\mathbb N$. This means that $\cos(2\pi i X)>1-\varepsilon$. Consider the $\arccos$ \enquote{branch}, defined by 
	\begin{align}
		\arccos_i:[-1, 1]&\to[2 i \pi, (2 i+1) \pi],\\ \cos(x)&\mapsto x.
	\end{align}
	This function is continuous and thus there exist $\delta(\varepsilon)>0$ such that 
	\begin{equation*}
		2\pi i X\in]2\pi k-\delta(\varepsilon),2\pi k+\delta(\varepsilon)[
	\end{equation*}
	for some $k\in\mathbb Z$ and such that $\lim_{\varepsilon\to0}\delta(\varepsilon)=0$.
	This means that 
	\begin{equation*}
	X\in\bigcup_{k=-i}^i\left]\frac ki-\frac{\delta(\varepsilon)}i, \frac ki+\frac{\delta(\varepsilon)}i\right[\overset{\text{Def.}}=D.
	\end{equation*}
	Each interval in the union has length $\frac{2\delta(\varepsilon)}i$ and there  are $2i$ intervals. Hence, since $\cos$ is $1$-Lipschitz, for any $j\in\mathbb N$, 
	$\cos(2\pi j D)$ has Lebesgue measure at most $4\delta(\varepsilon)$. If $\varepsilon$ is small enough, we have $4\delta(\varepsilon)<2=\text{length}([-1,1])$. Hence in that case, as $\mathsf P(X_j\in[a,b])>0$ for any $0\le a<b\le 1$, \begin{equation*}\mathsf P(X_j\in[-1,1]\setminus\cos(2\pi j D)\mid X_i>1-\varepsilon)=0<\mathsf P(X_j\in[-1,1]\setminus\cos(2\pi j D))\end{equation*} and thus the pair $(X_i, X_j)$ is not independent.
\end{proof}
\end{example**}

\begin{example**}\label{example:Gaussians}
Let $Z_n, n\in\N$, be independent standard normal random variables (i.e. each $Z_n$ has a normal distribution with mean 0 and variance 1). Let $W$ be a random variable, independent of every $Z_n$ for $n\in\N$, such that $\mathsf P(W=0)=\mathsf P(W=1)=\sfrac12$. Then the statements \ref{enum:Uncorrelated}, \ref{enum:Dependent}, \ref{enum:SLLN} from Example \ref{example:Cosine} also hold for the random variables $X_n\define W Z_n$, $n\in\N$.

\begin{proof}
	To prove \ref{enum:Uncorrelated}, note that, by symmetry of each $Z_n$ and by the independence assumptions, for all $i\neq j\in\N$, 
	\begin{equation*}
		\mathsf E(X_i)\mathsf E(X_j) = \mathsf E(W)\mathsf E(Z_i)\mathsf E(X_j)=0\quad\text{and}\quad\mathsf E(X_i X_j)=\mathsf E(W^2)\mathsf E(Z_i)\mathsf E(Z_j) = 0.
	\end{equation*}
	To prove \ref{enum:Dependent}, notice that if there exists any $i\in\N$ for which $X_i>0$, then $W=1$. Hence, for all $i,j\in\N$ with $i\neq j$,
	\begin{equation*}
		\mathsf P(X_j>0\mid X_i>0)=\mathsf P(Z_j>0)=\frac12>\mathsf P(Z_j>0 \text{ and } W=1)=\frac14=\mathsf P(X_j>0).
	\end{equation*}
	Finally I will prove \ref{enum:SLLN}. Since $\frac{X_1+\dots+X_n}n=W\frac{Z_1+\dots+Z_n}n$, and $\frac{Z_1+\dots+Z_n}n$ has a normal distribution with mean 0 and variance $\frac1n$, we see that the SLLN holds for the sequence $(X_n)_{n\in\N}$ in question.
\end{proof}
\begin{remark}
	Example \ref{example:Gaussians} also is an example of a sequence of random variables for which the SLLN holds, but for which the non-negativity/bounded-from-below assumption in Theorem \ref{Thm:KolmogoroffBedingung} is not satisfied. If we try to treat this problem by applying Theorem \ref{Thm:KolmogoroffBedingung} separately to $X_n^+\define\max\{X_n, 0\}$ and $X_n^-\define-\min\{X_n,0\}$, $n\in\N$, then we will run into other issues because
	\begin{equation}\label{eq:Variance of the sum}
		\mathsf V(X_1^++\dots+X_n^+)=n\, \mathsf V(X_1^+)+\binom n2 (\mathsf E(X_1^+ X_2^+)-\mathsf E(X_1^+)\mathsf E(X_2^+)),
	\end{equation}
	while
	\begin{equation}\label{eq:Sum of the variances}
		\mathsf V(X_1^+)+\dots+\mathsf V(X_n^+)=n\mathsf V(X_1^+).
	\end{equation}
	We can compute\footnote{%
		See for instance \url{https://mathworld.wolfram.com/Erfc.html} for more on the integral of the complementary error function (which I used in the computation of $\int_0^\infty\mathsf P(Z_1>t)\,\mathrm dt$). In the second computation, I use that for $t>0$,
		\begin{equation*}
		\mathsf P(X_1^+ X_2^+>t)=\mathsf P(X_1 X_2>t \text{ and } X_1>0 \text{ and } X_2>0)=\frac 12\mathsf P(Z_1 Z_2>t\text{ and }Z_1>0\text{ and }Z_2>0).
		\end{equation*}%
	}
	\begin{align*}
		\mathsf E(X_2^+)
		=\mathsf E(X_1^+)
		=\int_0^\infty\mathsf P(X_1^+>t)\,\mathrm dt
		=\frac12\int_0^\infty\mathsf P(Z_1>t)\,\mathrm dt
		=\frac1{2\sqrt{2\pi}}
		&\approx 0.20
	\shortintertext{and}
		\mathsf E(X_1^+ X_2^+)
		=\int_0^\infty \mathsf P(X_1^+ X_2^+>t)\,\mathrm dt=\int_0^\infty\int_0^\infty\int_{t/x}^\infty \frac{\exp\left(-\frac{x^2}2\right)}{\sqrt{2\pi}}\frac{\exp\left(-\frac{y^2}2\right)}{\sqrt{2\pi}}\,\mathrm dy\,\mathrm dx\,\mathrm dt
		&\approx 0.16.
	\end{align*}
	In particular, we see that $\mathsf E(X_1^+ X_2^+)>\mathsf E(X_1^+)\mathsf E(X_2^+)$, and hence by \eqref{eq:Variance of the sum}, $\mathsf V(X_1^++\dots+X_n^+)$ is bigger than some positive constant (independent of $n$) times $\binom n2$ for all $n\in\N$. Comparing this with \eqref{eq:Sum of the variances} shows that the variables $X_n^+, n\in\N$ are not even quasi-uncorrelated.
\end{remark}
\end{example**}

\begin{remark}
	These two examples were added thanks to a suggestion by Jordan Stoyanov to provide examples of (infinite) sequences of random variables that are pairwise uncorrelated but not pairwise independent. The above examples can be considered as an extension to infinite sequences of a property for a pair of random variables. (Examples of such pairs can be found in \cite{Stoyanov}, chapter 7).
\end{remark}

\section{Proofs} 
\subsection{Tools for Theorem \ref{Thm:KolmogoroffBedingung}}
\begin{definition}\label{Def:m_n s_n}
	From now on, fix real numbers $\alpha > 1$ and $\varepsilon > 0$. The random variables $X_n$ are as in Theorem \ref{Thm:KolmogoroffBedingung}. Recall that, by assumption, $A\define\sup_{n\in\N} \frac{\Erwartung S_n}n<\infty$. Hence it is possible to define the integer $L \define \floor*{\frac{A}{\varepsilon}}$, where $\floor{\bullet}$ denotes \enquote{the largest integer smaller than, or equal to, \textbullet} (also known as the \enquote{floor part}).
	
	\smallskip
	For every $n\in\mathbb N$ let $m(n) \define \floor{\log_\alpha n}$. Then 
	\begin{gather}	\nonumber
		m(n) \to \infty \text{ as }n\to\infty\quad\text{and}\quad\alpha^{m(n)} \le n < \alpha^{m(n)+1} \text{ for all }n.
	\intertext{For a non-negative random variable $X_n$, let $s(n)\in\set{0, \dots, L}$ be natural numbers such that}
		\label{Abschaetzung S_n}
		\frac{\Erwartung(S_n)}n \in[\varepsilon\cdot s(n), \varepsilon\cdot(s(n)+1)[.
	\end{gather}
	The numbers $s(n)$ are always uniquely well-defined because the intervals in \eqref{Abschaetzung S_n} are disjoint and their union contains the interval $[0, A]$.
\end{definition}

\begin{definition}\label{Def:k_n}
	Let the random variables $X_n, n\in\N$, be as in Theorem \ref{Thm:KolmogoroffBedingung}. For any $n\in\mathbb N$ and $s \in\set{0,\dots, L}$, consider the sets 
	\begin{equation}\label{eq:Definition der Menge T}
		T_{n,s} \coloneqq \set{k\in\mathbb N: \alpha^{n} 
		\le k < \alpha^{n+1} \text{ and } \varepsilon\cdot s \le 
		\frac{\Erwartung(S_k)}k < \varepsilon\cdot(s+1)}.
	\end{equation}

	For some $n$ and $s$, $T_{n,s}$ can be empty. However, $T_{m(n),s(n)}$ is never empty as $n\in T_{m(n),s(n)}$ for any $n\in\mathbb N$. If $T_{n,s}$ is not empty, set $k(n,s)^+ \define \max T_{n,s}$ and $k(n,s)^- \define \min T_{n,s}$. Otherwise, if $T_{n,s}$ is empty, take \begin{equation*}k(n,s)^+=k(n,s)^-\define\floor{\alpha^n}.\end{equation*}	
	From now on, I will use the symbol $k(n,s)^\pm$ in statements that are true
	for both $k(n,s)^+$ and $k(n,s)^-$, respectively.
	For example, we have $k(n,s)^\pm \geq \floor{\alpha^n}\to \infty$ as $n\to\infty$. 
\end{definition}

\begin{remark}
	If we are dealing with $\frac{S_n-\mathsf ES_n}{b_n}$ instead of $\frac{S_n-\mathsf ES_n}n$ as described in Remark \ref{rmk:Arbitrary sequence}, then one has to replace $\frac{\mathsf ES_n}n$ in \eqref{Abschaetzung S_n} and \eqref{eq:Definition der Menge T} with $\frac{\mathsf ES_n}{b_n}$.
\end{remark}
The next statement is about the Kolmogoroff condition for the $S_{k(n,s)^\pm}$ subsequences.

\begin{lemma}\label{lemma:Subsequences}
	Let $\Folge Xn$ be a sequence of random variables satisfying the assumptions of Theorem 1. Then the $S_{k(n,s)^\pm}$ subsequences satisfy the Kolmogoroff condition, i.e.
	\begin{equation}\label{KolmogoroffBedingung1}
		\sum_{n=1}^\infty \frac{\Varianz (S_{k(n,s)^\pm})}{(k(n,s)^\pm)^2} < \infty \text{ for any } s \in \set{0, \dots, L}.
	\end{equation} 
\end{lemma}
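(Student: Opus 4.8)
The plan is to fix $s\in\{0,\dots,L\}$ and write $k_n\define k(n,s)^\pm$, treating the $+$ and $-$ cases at once: the only feature of these numbers I intend to use is the two-sided bound
\[
  \lfloor\alpha^n\rfloor\le k_n<\alpha^{n+1}\qquad(n\ge 1),
\]
which holds in both cases (for non-empty $T_{n,s}$ it is the defining membership $\alpha^n\le k_n<\alpha^{n+1}$, and for empty $T_{n,s}$ it holds because $k_n=\lfloor\alpha^n\rfloor$). First I would strip away all correlations via the quasi-uncorrelatedness hypothesis, $\mathsf V S_{k_n}\le c\sum_{j=1}^{k_n}\mathsf V X_j$, and then interchange the two summations (legitimate by Tonelli, since every term is non-negative):
\[
  \sum_{n=1}^\infty\frac{\mathsf V S_{k_n}}{k_n^2}
  \le c\sum_{n=1}^\infty\frac1{k_n^2}\sum_{j=1}^{k_n}\mathsf V X_j
  = c\sum_{j=1}^\infty \mathsf V X_j\sum_{n:\,k_n\ge j}\frac1{k_n^2}.
\]
Thus the whole statement reduces to a uniform control of the inner sum $\sum_{n:\,k_n\ge j}k_n^{-2}$.

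The heart of the matter is the estimate $\sum_{n:\,k_n\ge j}k_n^{-2}\le C_\alpha\,j^{-2}$ with a constant $C_\alpha$ depending on $\alpha$ alone; this is where the geometric spacing of the $k_n$ is decisive, and I expect it to be the main obstacle. The lower bound $k_n\ge\lfloor\alpha^n\rfloor\ge\alpha^n-1\ge\alpha^n-\alpha^{n-1}=\tfrac{\alpha-1}{\alpha}\,\alpha^n$ (valid for $n\ge1$) makes each term geometrically small, $k_n^{-2}\le\beta^{-2}\alpha^{-2n}$ with $\beta\define\tfrac{\alpha-1}{\alpha}$. The upper bound $k_n<\alpha^{n+1}$ then locates the summation range: $k_n\ge j$ forces $\alpha^{n+1}>j$, so $\{n:k_n\ge j\}$ is contained in a geometric tail $\{n\ge N_j\}$ whose first index satisfies $\alpha^{N_j}>j/\alpha$. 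Summing the geometric series over this tail gives
\[
  \sum_{n:\,k_n\ge j}\frac1{k_n^2}
  \le \frac1{\beta^2}\sum_{n\ge N_j}\alpha^{-2n}
  = \frac{\alpha^{-2N_j}}{\beta^2(1-\alpha^{-2})}
  < \frac{\alpha^2}{\beta^2(1-\alpha^{-2})}\cdot\frac1{j^2}=:\frac{C_\alpha}{j^2}.
\]

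Combining the two steps yields $\sum_{n=1}^\infty k_n^{-2}\,\mathsf V S_{k_n}\le c\,C_\alpha\sum_{j=1}^\infty j^{-2}\,\mathsf V X_j$, which is finite by the Kolmogoroff condition \eqref{eq:KolmogoroffBedingung}; this is exactly \eqref{KolmogoroffBedingung1}. The delicate points to watch are keeping $C_\alpha$ independent of both $j$ and $s$, and handling the floor function together with the restriction $n\ge1$ so that the lower bound on $\lfloor\alpha^n\rfloor$ is clean. For the normalization $b_n$ of Remark~\ref{rmk:Arbitrary sequence}, the same interchange applies, but the geometric tail estimate must be reworked with $b_{k_n}$ replacing $k_n$ in the denominators.
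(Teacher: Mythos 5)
Your proposal is correct and follows essentially the same route as the paper: bound $\mathsf{V} S_{k_n}$ by quasi-uncorrelatedness, interchange the two summations, and reduce everything to the uniform estimate $\sum_{\{n:\,k_n\ge j\}}k_n^{-2}\le C_\alpha j^{-2}$ obtained from the sandwich $\lfloor\alpha^n\rfloor\le k_n<\alpha^{n+1}$ and a geometric tail. The only difference is cosmetic: you make the geometric-series constant explicit via $\lfloor\alpha^n\rfloor\ge\frac{\alpha-1}{\alpha}\,\alpha^n$ for $n\ge1$, where the paper leaves it as an unspecified $C>0$ coming from comparison with a geometric series.
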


\begin{remark}
	Lemma \ref{lemma:Subsequences} is not surprising because $(X_n)_{n\in\N}$ satisfies the Kolmogoroff condition \eqref{eq:KolmogoroffBedingung} and the numbers $k(n,s)^\pm$ grow exponentially in $n$.
\end{remark}
\begin{remark}
	If we consider the case of Remark \ref{rmk:Arbitrary sequence}, then in the same way one can prove that
	\begin{equation*}
		\sum_{n=1}^\infty \frac{\mathsf VS_{k(n,s)^\pm}}{b_{k(n,s)^\pm}^2}<\infty.
	\end{equation*}
\end{remark}
	
\begin{proof}[Proof of Lemma \ref{lemma:Subsequences}] Because the variables $X_n$ are quasi-uncorrelated, we have
	\begin{equation}\label{lemma 1 Teil1}
		\begin{split}
			\sum_{n=1}^\infty \frac{\Varianz S_{k(n,s)^\pm}}{(k(n,s)^\pm)^2} 
			&\le c \sum_{n=1}^\infty\bigg(\frac{1}{(k(n,s)^\pm)^2} \sum_{j=1}^{k(n,s)^\pm} \Varianz X_j \bigg) 
			\\
			&=c\sum_{j=1}^\infty \bigg(\Varianz X_j \sum_{\set{n: k(n,s)^\pm \geq j}}\frac{1}{(k(n,s)^\pm)^2}\bigg)
	\end{split}
	\end{equation}
	(the last equality holds by rearranging the terms).
	For brevity, I define $\kappa_j\define\sum_{\set{n: k(n,s)^\pm \geq j}}\frac{1}{(k(n,s)^\pm)^2}$.
	By Definition \ref{Def:k_n}, we have that for all $n,s$,
	\begin{equation*}
		\floor{\alpha^n} \le k(n,s)^\pm\le \alpha^{n+1} \text{ so that } \set{n: k(n,s)^\pm \geq j} \subset \set*{n: \alpha^{n} \geq \frac j\alpha}.
	\end{equation*}
	It follows that
	\begin{equation}\label{eq:Kappabound}
			\kappa_j \le \sum_{\set*{n:\alpha^{n} \geq \frac j\alpha}} \frac{1}{\floor{\alpha^n}}
			= \frac{\alpha^2}{j^2} \sum_{n=0}^\infty \frac{1}{\floor{\alpha^n}^2}%_{\mathclap{\text{behaves like a geometric series}}}
			\le \frac{\alpha^2}{j^2 } C \frac{\alpha^2}{\alpha^2-1} \text{ for some } C>0.
	\end{equation}
	In the last inequality, I have used that $\sum_{n=0}^\infty \frac{1}{\floor{\alpha^n}^2}$ behaves like a geometric series.
	Using \eqref{eq:Kappabound} in \eqref{lemma 1 Teil1} we get, as the $X_n$ satisfy the Kolmogoroff condition,
	\begin{equation*}
		\sum_{n=1}^\infty \frac{\Varianz S_{k(n,s)^\pm}}{(k(n,s)^\pm)^2} 
		\le c\cdot C\frac{\alpha^4}{\alpha^2-1} \sum_{j=1}^\infty \frac{\Varianz X_j}{j^2} < \infty.
	\end{equation*}
	 This completes the proof of \eqref{KolmogoroffBedingung1}.
\end{proof}

\begin{lemma}[SLLN for the $S_{k(n,s)^\pm}$ subsequences]\label{lem:SLLN for the subsequences}
	Let $\Folge Xn$ be a sequence of random variables as in Theorem 1. Then the SLLN holds for the subsequences $S_{k(n,s)^\pm}, n\in\N$, i.e. we have 
	\begin{equation*}
		\lim_{n\to\infty} \frac{S_{k(n,s)^\pm} - \Erwartung S_{k(n,s)^\pm}}{k(n,s)^\pm} = 0 \text{ a.s. for any } s \in\set{0, \dots, L}.
	\end{equation*}
\end{lemma}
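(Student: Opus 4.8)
The plan is to combine the Kolmogoroff-type estimate from Lemma \ref{lemma:Subsequences} with Chebyshev's inequality and the first Borel--Cantelli lemma, which is the classical route from a condition of the form \eqref{KolmogoroffBedingung1} to an almost-sure law of large numbers. Throughout, I fix $s\in\set{0,\dots,L}$ and abbreviate $k_n\define k(n,s)^\pm$; the argument is identical for the two choices of sign, so I treat them simultaneously.

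First I would fix $\delta>0$ and apply Chebyshev's inequality to the centred subsequence to obtain
\begin{equation*}
	\Prob{}\left(\abs*{S_{k_n}-\Erwartung{S_{k_n}}}>\delta k_n\right)\le\frac{\Varianz{S_{k_n}}}{\delta^2 k_n^2}.
\end{equation*}
Summing over $n$ and invoking Lemma \ref{lemma:Subsequences} then yields
\begin{equation*}
	\sum_{n=1}^\infty\Prob{}\left(\abs*{S_{k_n}-\Erwartung{S_{k_n}}}>\delta k_n\right)\le\frac{1}{\delta^2}\sum_{n=1}^\infty\frac{\Varianz{S_{k_n}}}{k_n^2}<\infty.
\end{equation*}
By the first Borel--Cantelli lemma, the events $\set{\abs*{S_{k_n}-\Erwartung{S_{k_n}}}>\delta k_n}$ occur for only finitely many $n$ almost surely; equivalently, $\limsup_{n\to\infty}\frac{\abs*{S_{k_n}-\Erwartung{S_{k_n}}}}{k_n}\le\delta$ a.s.

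The only point requiring a little care is that the almost-sure exceptional set produced in the previous step may depend on $\delta$. To upgrade this to genuine convergence to $0$, I would run the argument along the countable family $\delta=\frac1m$, $m\in\N$, obtaining for each $m$ a null set $N_m$ off which $\limsup_{n\to\infty}\frac{\abs*{S_{k_n}-\Erwartung{S_{k_n}}}}{k_n}\le\frac1m$. On the complement of the still-null union $\bigcup_{m\in\N}N_m$ we then have $\limsup_{n\to\infty}\frac{\abs*{S_{k_n}-\Erwartung{S_{k_n}}}}{k_n}=0$, which is exactly the asserted almost-sure limit. I do not expect any genuine obstacle here: the entire difficulty has already been absorbed into Lemma \ref{lemma:Subsequences}, whose summability conclusion is precisely what makes the first Borel--Cantelli lemma applicable, so the present lemma follows essentially mechanically.
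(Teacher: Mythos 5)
Your proposal is correct and follows essentially the same route as the paper's own proof: Chebyshev's inequality applied to the centred subsequence, summability via Lemma \ref{lemma:Subsequences}, the first Borel--Cantelli lemma for each fixed $\delta$, and then the countable family $\delta=\frac1m$ to remove the dependence of the null set on $\delta$. The last step, which you spell out explicitly, is exactly what the paper compresses into the phrase \enquote{standard results from probability theory}, so your write-up is if anything slightly more complete.
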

\begin{remark}
	If we deal with the case of Remark \ref{rmk:Arbitrary sequence}, then one can prove in the same way as below that
	\begin{equation*}
		\lim_{n\to\infty} \frac{S_{k(n,s)^\pm} - \Erwartung S_{k(n,s)^\pm}}{b_{k(n,s)^\pm}} = 0 \text{ a.s. for all } s \in\set{0, \dots, L}.
	\end{equation*}
\end{remark}
\begin{proof}[Proof of Lemma \ref{lem:SLLN for the subsequences}]
	Fix $s\in\set{0, \dots, L}$. For $\delta>0$, define the events \begin{equation*}A_\delta(n) \define \set*{\abs*{\frac{S_{k(n,s)^\pm}}{k(n,s)^\pm} - \Erwartung{\frac{S_{k(n,s)^\pm}}{k(n,s)^\pm}}} > \delta}.\end{equation*}
	Then
	\begin{gather}
		\begin{split}
			A_\delta(n) &= \set*{\abs*{S_{k(n,s)^\pm} - \Erwartung{S_{k(n,s)^\pm}}} > k(n,s)^\pm \cdot 
			\delta} \\&= \set*{\abs*{S_{k(n,s)^\pm} - \Erwartung{S_{k(n,s)^\pm}}}^2 > (k(n,s)^\pm)^2 \cdot \delta^2}.
		\end{split}
		\shortintertext{We can now use the Chebyshev inequality and Lemma 1 to derive that}\label{Bounded sum}
		\sum_{n=1}^\infty \mathsf P(A_\delta(n)) \overset{\text{Chebyshev}}\le
		\frac{1}{\delta^2} \sum_{n=1}^\infty \frac{\Varianz S_{k(n,s)^\pm}}{(k(n,s)^\pm)^2}
		\overset{\text{Lemma 1}}< \infty.
	\end{gather}
	Consider the sets
	\begin{equation*}
		A_\delta \coloneqq \limsup_{n\to\infty} A_\delta(n).
	\end{equation*}
	The relation \eqref{Bounded sum} allows us to apply the Borel-Cantelli Lemma to obtain that $\mathsf P(A_\delta) = 0$ for any $\delta > 0$. This implies the a.s. convergence of 	$\abs*{\frac{S_{k(n,s)^\pm}}{k(n,s)^\pm} - \Erwartung{\frac{S_{k(n,s)^\pm}}{k(n,s)^\pm}}}$ to $0$ as $n\to\infty$ by standard results from probability theory and thus also proves Lemma 2.
\end{proof}

\subsection{Proof of Theorem \ref{Thm:KolmogoroffBedingung}}
\begin{proof}[Proof of Theorem \ref{Thm:KolmogoroffBedingung}.]
	From now on, I will abbreviate $k(m(n),s(n))^\pm$ by $k^\pm$. The idea of the proof is to use the fact that the SLLN holds for the $k^\pm$-subsequence and conclude that it also holds for the sequence $(X_n)_{n\in\N}$ using monotonicity arguments. This is the place where the non-negativity of the $X_n$'s is necessary.
	
	\smallskip
	Using definitions \ref{Def:m_n s_n}, \ref{Def:k_n} and the bound \eqref{Abschaetzung S_n}, we obtain that for any $\varepsilon>0$ and $n \in\mathbb N$: 
	\begin{equation}\label{k_n}
		k^- \le n \le k^+ \quad\text{ and }\quad
		\abs*{\frac{\Erwartung S_{k^\pm}}{k^\pm} - \frac{\Erwartung S_n}{n}} \le \varepsilon.
	\end{equation}
	Since all $X_n$'s are non-negative, we have
	\begin{equation}\label{S_n}
		0 \le S_n \le S_{n'} \text{ for all } n  \le n'.
		\text{ In particular, } 
		0 \le \Erwartung S_n \le \Erwartung S_{n'}.
	\end{equation}
	Note that by Definitions \ref{Def:m_n s_n} and \ref{Def:k_n}, we have
	$n\geq k^+ / \alpha$. Thus $1/n\le \alpha/k^+$ and 
	\begin{gather}
		\left(\frac{1}{n}-\frac{\alpha}{k^+}\right) S_{k^+}
		+ (\alpha-1)\frac{\Erwartung S_{k^+}}{k^+}
		\le (\alpha-1) B, \label{eq:Upper bound (alpha-1) times B}
	\shortintertext{since $\frac{\Erwartung S_{k^+}}{k^+}\le B$ and $\left(\frac{1}{n}-\frac{\alpha}{k^+}\right) S_{k^+}\le 0$. Bound \eqref{eq:Upper bound (alpha-1) times B} is equivalent to}
	\label{letzte Ungleichung}
		\frac{S_{k^+}}{n} - \frac{\Erwartung S_{k^+}}{k^+} \le 
		\frac{\alpha}{k^+} (S_{k^+} - \Erwartung S_{k^+}) + (\alpha - 1)B.
	\end{gather}
	It is also true that
	$\alpha k^- \geq n$. Thus $1/(\alpha k^-) \le 1/n$. By the definition of $B$,
	\begin{equation}\label{erste Ungleichung}
	\begin{split}
		-\left(1-\frac1\alpha\right)B + \frac{1}{\alpha k^-}(S_{k^-} - \Erwartung S_{k^-}) 
		&\le -\left(1-\frac1\alpha\right)\frac{1}{k^-}\Erwartung S_{k^-} + \frac{1}{\alpha k^-}(S_{k^-}-\Erwartung S_{k^-}) \\
		&= \frac{S_{k^-}}{\alpha k^-} - \Erwartung{} \frac{S_{k^-}}{k^-}
		\underset{\eqref{k_n}}\le \frac{S_{k^-}}{n} - \Erwartung{} \frac{S_n}{n} + \varepsilon
	\end{split}
	\end{equation}
	where $\frac{S_{k^-}}{\alpha k^-}\le\frac{S_{k^-}}n$ was used in the last inequality.
	In conclusion, referring to \eqref{erste Ungleichung}, \eqref{S_n} and \eqref{letzte Ungleichung},
	\begin{equation}\label{Grosse Gleichung}%
		 %\begin{split}
		 %	-\varepsilon-\left(1-\frac1\alpha\right) B + \frac{1}{\alpha k^-} (S_{k^-} - 
		 %	\Erwartung S_{k^-}) 
		 %	&\overset{\mathclap{\eqref{erste Ungleichung}}}\le \frac{S_{k^-}}{n} - \frac{\Erwartung S_n}{n} \\
		 %	&\overset{\mathllap{k^- \le n, \; \eqref{S_n}}}\le \frac{1}{n}(S_n - \Erwartung S_n) \\
		 %	&\overset{\mathllap{n\le k^+, \; \eqref{S_n}}}\le \frac{1}{n} S_{k^+} -
		 %	\frac{1}{k^+} \Erwartung S_{k^+} + \varepsilon \\
		 %	&\overset{\mathclap{\eqref{letzte Ungleichung}}}\le \frac{\alpha}{k^+} (S_{k^+} - \Erwartung S_{k^+}) + (\alpha - 1)B + \varepsilon.
		 %\end{split}
		 \begin{split}
		 -\varepsilon-\left(1-\frac1\alpha\right) B + \frac{1}{\alpha k^-} (S_{k^-} - 
		 \Erwartung S_{k^-}) 
		 &\le \frac{S_{k^-}}{n} - \frac{\Erwartung S_n}{n} \\
		 &\le \frac{1}{n}(S_n - \Erwartung S_n) \\
		 &\le \frac{1}{n} S_{k^+} -
		 \frac{1}{k^+} \Erwartung S_{k^+} + \varepsilon \\
		 &\le \frac{\alpha}{k^+} (S_{k^+} - \Erwartung S_{k^+}) + (\alpha - 1)B + \varepsilon.
		 \end{split}
	\end{equation}
	Lemma 2 guarantees that for any $\alpha > 1$ and  $\varepsilon > 0$, there exists a measurable event $\Omega_{\alpha, \varepsilon}\subset\Omega$ such that 
	\begin{equation*}\mathsf P(\Omega_{\alpha, \varepsilon}) = 1\quad\text{ and }\quad\lim_{n\to\infty} \dfrac{S_{k^\pm}(\omega) - \Erwartung S_{k^\pm}}{\alpha k^\pm} = 0 \text{ for all $\omega \in \Omega_{\alpha, \varepsilon}$}.
	\end{equation*}
	
	\smallskip
	Using the previously proven inequality \eqref{Grosse Gleichung} yields
	\begin{equation}\label{Ergebnis}
		 -\varepsilon - \left(1-\frac 1\alpha\right)B
		 \le \liminf_{n\to\infty} \frac{1}{n}(S_n(\omega) - \Erwartung S_n)
		 \le \limsup_{n\to\infty} \frac{1}{n}(S_n(\omega) - \Erwartung S_n)
		 \le (\alpha - 1)B + \varepsilon
	\end{equation}
	for every $\omega \in \Omega_{\alpha,\varepsilon}$. This holds for any
	$\alpha > 1$ and $\varepsilon > 0$.
	By $\sigma$-additivity of $\mathsf P$, we have 
	\begin{equation}\label{sigma}
		\Prob{}\Big(\bigcap_{k,m\in\mathbb N} \Omega_{1+\frac{1}{m},\frac{1}{k}}\Big) = 1.
	\end{equation}
	If we substitute $\alpha = 1 + \frac 1m$ and $\varepsilon = \frac 1k$ for $m,k\in\N$ into \eqref{Ergebnis} and let $m$ and $k$ go to $\infty$ simultaneously, then both \enquote{outer sides} of \eqref{Ergebnis} go to $0$. It follows from \eqref{sigma} that 
	\begin{equation*}\lim_{n\to\infty} \frac{1}{n}\big(S_n(\omega) - \Erwartung S_n\big) = 0\end{equation*} for almost every $\omega \in \Omega$.
	
	\medskip
	This completes the proof of Theorem \ref{Thm:KolmogoroffBedingung}.
\end{proof}
\begin{remark}
	Note that \eqref{Ergebnis} becomes
	\begin{equation*}
		-\varepsilon - \left(1-\frac 1\alpha\right)B 
		\le\liminf_{n\to\infty} \frac{1}{b_n}(S_n(\omega) - \Erwartung S_n)
		\le \limsup_{n\to\infty} \frac{1}{b_n}(S_n(\omega) - \Erwartung S_n)
		\le (\alpha - 1)B + \varepsilon
	\end{equation*}
	in the case described in Remark \ref{rmk:Arbitrary sequence}.
\end{remark}

\subsection{Proofs of Corollary \ref{Cor:PaarweiseUnabhKolmogoroff} and Corollary \ref{Cor:ZuTheorem2}}
\begin{proof}[Proof of Corollary \ref{Cor:PaarweiseUnabhKolmogoroff}]
	We can assume without loss of generality that the variables $X_n, n\in\N$ are centered, i.e., that $\Erwartung X_n = 0$ for all $n\in\N$ (otherwise consider $\tilde X_n\define X_n-\Erwartung X_n$). Since the variables $X_n, n\in\N$ are pairwise independent, we see that the sequences $(X_n^+)_{n\in\N}$ and $(X_n^-)_{n\in\N}$, consist of non-negative and pairwise independent random variables and that each sequences satisfies the Kolmogoroff condition (since $\Varianz X_n^+\le \Varianz X_n$ and $\Varianz X_n^-\le \Varianz X_n$),\footnote{For any $n\in\N$, one has $\mathsf V(X_n)=\mathsf E(X_n^2)-\mathsf E(X_n)^2=\mathsf E(X_n^2)\ge\mathsf E\Big((X_n^+)^2\Big)\ge\mathsf V\Big(X_n^+\Big)$. The same is true for $X_n^-$. In the first inequality, I have used that $(X_n^+)^2=\max\{X_n,0\}^2\le X_n^2$.} and that
	\begin{equation*}
		\sup_{n\in\N} \frac{\sum_{k=1}^n\Erwartung{} X_k^+}n\le\sup_{n\in\N}\frac{\sum_{k=1}^n\Erwartung{} \abs{X_k}}n<\infty.
	\end{equation*}
	The analogous relation is true for $(X_k^-)_{k\in\N}$.
	
	\smallskip
	Hence, by Theorem \ref{Thm:KolmogoroffBedingung}, we get
	\begin{subequations}
	\begin{align}\label{eq:Conclusion for X plus}
		\lim_{n\to\infty} \frac{\sum_{k=1}^n X_k^+-\Erwartung X_k^+}n=0, \\ \label{eq:Conclusion for X minus}
		\lim_{n\to\infty} \frac{\sum_{k=1}^n X_k^--\Erwartung X_k^-}n=0.
	\end{align}
	\end{subequations}
	Adding the two proves the Corollary.
\end{proof}

\begin{proof}[Proof of Corollary \ref{Cor:ZuTheorem2}]
	By using the same decomposition as before and then directly applying Theorem \ref{Thm:PaarweiseUnabh} to $(X_n^+)_{n\in\N}$ and $(X_n^-)_{n\in\N}$, we get \eqref{eq:Conclusion for X plus} and \eqref{eq:Conclusion for X minus}, so that we can conclude once again by adding.
\end{proof}

\subsection{Tools for Theorem \ref{Thm:PaarweiseUnabh}}
I start with a statement showing that the expected value of the truncations does not differ too much from the original.
\begin{lemma}\label{lem:ErwartungswerteSindUngefaehrGleich}
	Let $\Folge Xn$ be a sequence of non-negative random variables such that, as $n\to\infty$, $X_n-Y_n\to0$ in $L^1$-sense, where $Y_n=X_n\cdot 1_{\set{X_n\le n}}$. Then, for $T_n=Y_1+\dots+Y_n$,
	\begin{equation*}
		\lim_{n\to\infty} \frac{\Erwartung{S_n} - \Erwartung{T_n}}n = 0.
	\end{equation*}
\end{lemma}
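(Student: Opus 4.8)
The plan is to reduce the statement to the classical Cesàro summation theorem, the only genuine input being that non-negativity lets us identify the defect of the expectations with an $L^1$-norm. First I would record that every $X_k$ is integrable: since $0\le Y_k\le k$ the truncation $Y_k$ is bounded, hence integrable, and by hypothesis $X_k-Y_k\in L^1$, so $X_k=(X_k-Y_k)+Y_k$ is integrable as well. This legitimises the use of linearity of $\mathsf E$ to write
\begin{equation*}
\Erwartung{S_n}-\Erwartung{T_n}=\sum_{k=1}^n\Erwartung{(X_k-Y_k)}.
\end{equation*}

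Next I would exploit non-negativity. Because each $X_k\ge0$, we have $X_k-Y_k=X_k\cdot 1_{\set{X_k>k}}\ge0$, so each summand coincides with its absolute value and $\Erwartung{(X_k-Y_k)}=\Erwartung{\abs{X_k-Y_k}}$. Writing $a_k\define\Erwartung{\abs{X_k-Y_k}}\ge0$, the assumption that $X_n-Y_n\to0$ in the $L^1$-sense says precisely that $a_k\to0$ as $k\to\infty$, and the quantity we must control is exactly the Cesàro average
\begin{equation*}
\frac{\Erwartung{S_n}-\Erwartung{T_n}}n=\frac1n\sum_{k=1}^n a_k.
\end{equation*}

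Finally I would invoke (or, in one line, reprove) the Cesàro mean theorem: given $\eta>0$, pick $N$ with $a_k<\eta$ for all $k>N$, and split $\frac1n\sum_{k=1}^n a_k$ into the head $\frac1n\sum_{k=1}^N a_k$ and the tail $\frac1n\sum_{k=N+1}^n a_k$. The head has a fixed numerator and so tends to $0$ as $n\to\infty$, while the tail is bounded by $\eta\,\frac{n-N}n\le\eta$. Letting $n\to\infty$ and then $\eta\to0$ yields $\frac1n\sum_{k=1}^n a_k\to0$, which is the assertion of the lemma. There is essentially no obstacle here: the entire content is the passage from the termwise $L^1$-convergence of $X_n-Y_n$ to the convergence of the averaged expectations, and the non-negativity hypothesis is exactly what makes $\Erwartung{(X_k-Y_k)}$ equal to its $L^1$-norm rather than merely bounded by it.
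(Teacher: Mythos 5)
Your proof is correct and follows essentially the same route as the paper: write $\Erwartung{S_n}-\Erwartung{T_n}$ as $\sum_{k=1}^n\Erwartung{(X_k-Y_k)}$, note that the $L^1$-hypothesis makes these terms tend to $0$, and conclude by Cesàro summation. The paper's proof is just a terser version of yours; your added details (integrability of each $X_k$, and the identification of $\Erwartung{(X_k-Y_k)}$ with the $L^1$-norm via non-negativity) are exactly the steps the paper leaves implicit.
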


\begin{proof}
	The proof is straight-forward. Indeed,
	\begin{equation}\label{eq:AbschaetzungDerDifferenzDerErwartungswerte}
		0\le\lim_{n\to\infty} \frac{\Erwartung{S_n} - \Erwartung{T_n}}n
		= \lim_{n\to\infty} \frac{\sum_{k=1}^n \Erwartung{}\left(X_k -Y_k\right)}n=0,
	\end{equation}
	where the last equality is true by Cesàro summation.
\end{proof}

The next result shows that if the SLLN holds for $(T_n)_{n\in\N}$, then it also holds for $(S_n)_{n\in\N}$.

\begin{lemma}\label{T_n reicht}
	Let $\Folge Xn$ be a sequence of non-negative random variables such that, as $n\to\infty$, $X_n-Y_n\xrightarrow{\text{a.s.}}0$. Using the same notation as in Lemma \ref{lem:ErwartungswerteSindUngefaehrGleich}, we have
	\begin{equation*}
		\lim_{n\to\infty} \frac{S_n-T_n}n = 0 \text{ a.s.}
	\end{equation*}
\end{lemma}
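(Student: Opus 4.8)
The plan is to reduce the statement to a deterministic Cesàro argument applied pointwise on a full-measure set. First I would rewrite the difference as a partial sum: since $S_n = \sum_{k=1}^n X_k$ and $T_n = \sum_{k=1}^n Y_k$, we have
\begin{equation*}
	\frac{S_n - T_n}{n} = \frac{1}{n}\sum_{k=1}^n (X_k - Y_k) = \frac{1}{n}\sum_{k=1}^n X_k\cdot 1_{\set{X_k > k}},
\end{equation*}
where the second equality uses $Y_k = X_k\cdot 1_{\set{X_k\le k}}$ and hence $X_k - Y_k = X_k\cdot 1_{\set{X_k>k}}$. By non-negativity of the $X_k$ each summand is non-negative, so the whole expression is a Cesàro mean of a non-negative sequence.

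The key observation is that the hypothesis $X_n - Y_n \xrightarrow{\text{a.s.}} 0$ provides a measurable event $\Omega_0$ with $\mathsf P(\Omega_0) = 1$ such that, for every fixed $\omega \in \Omega_0$, the numerical sequence $(X_k(\omega) - Y_k(\omega))_{k\in\N}$ converges to $0$. For each such $\omega$ I would then invoke the elementary Cesàro theorem: if a real sequence $(a_k)$ tends to a limit $\ell$, then its averages $\frac{1}{n}\sum_{k=1}^n a_k$ tend to $\ell$ as well. Applying this with $a_k = X_k(\omega) - Y_k(\omega)$ and $\ell = 0$ gives $\frac{1}{n}(S_n(\omega) - T_n(\omega)) \to 0$ for every $\omega\in\Omega_0$, which is exactly the claimed almost-sure convergence.

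I do not expect any genuine obstacle here; this is simply the pointwise analogue of the $L^1$-level Cesàro computation already carried out in Lemma \ref{lem:ErwartungswerteSindUngefaehrGleich}. The only point requiring a little care is conceptual rather than technical: the Cesàro theorem must be applied $\omega$-by-$\omega$ on the full-measure set $\Omega_0$, so that the almost-sure convergence of $(X_k - Y_k)$ translates into almost-sure convergence of the averages, and not merely into convergence in some averaged or $L^1$ sense. Since a single full-measure event $\Omega_0$ suffices and no further countable intersection is needed, the argument is complete once the Cesàro step is invoked.
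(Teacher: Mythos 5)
Your proof is correct and takes essentially the same approach as the paper: both write $\frac{S_n-T_n}{n}$ as the Ces\`aro mean of the non-negative sequence $X_k-Y_k$ and apply the elementary Ces\`aro theorem pointwise on the full-measure event where $X_k-Y_k\to0$. Your write-up merely makes explicit the $\omega$-by-$\omega$ argument that the paper's one-line proof leaves implicit.
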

\begin{proof}
	Same idea as in Lemma \ref{lem:ErwartungswerteSindUngefaehrGleich},
	\begin{equation*}
		0\le\frac{S_n-T_n}n=\frac{\sum_{k=1}^n X_k-Y_k}n \xrightarrow{\text{a.s.}} 0 \text{ as }n\to\infty,
	\end{equation*}
	by Cesàro-summation.
\end{proof}

\subsection{Proof of Theorem \ref{Thm:PaarweiseUnabh}}
\begin{proof}[Proof of Theorem \ref{Thm:PaarweiseUnabh}.]
	From Lemmas \ref{lem:ErwartungswerteSindUngefaehrGleich} and \ref{T_n reicht}, we know that for almost every $\omega\in\Omega$, we have
	\begin{equation*}
	\begin{split}
		\lim_{n\to\infty} \frac{S_n(\omega)-\Erwartung S_n}n &= \lim_{n\to\infty} \frac{T_n(\omega)-\Erwartung T_n}n+\frac{\Erwartung T_n-\Erwartung S_n}n+\frac{S_n(\omega)-T_n(\omega)}n\\&=\lim_{n\to\infty} \frac{T_n(\omega)-\Erwartung T_n}n.
	\end{split}
	\end{equation*}
	Since the sequence $(Y_n)_{n\in\N}$ satisfies the Kolmogoroff condition by assumption and since the variables $Y_n,n\in\N$ are pairwise independent, we can apply Theorem \ref{Thm:KolmogoroffBedingung} to obtain that
	\begin{equation*}
		\lim_{n\to\infty} \frac{T_n-\Erwartung T_n}n=0 \text{ a.s.}
	\end{equation*}
	This completes the proof.
\end{proof}

\subsection{Proof of Theorem \ref{Prop:Identical Distribution}}

\begin{proof}[Proof of Theorem \ref{Prop:Identical Distribution}.]
	I will first prove that the sequence $(Y_n)_{n\in\N}$ satisfies the Kolmogoroff condition. For this, let us bound the tail sums of the convergent series $\sum_{j=1}^\infty \frac 1{j^2}$. Recall that $\sum_{j=1}^\infty \frac 1{j^2}=\frac{\pi^2}6$, which is the famous Basel problem. I claim that
	\begin{equation}\label{eq:BaselBounds}
		\sum_{j=k}^\infty \frac1{j^2}\le\frac{1}k\cdot\frac{\pi^2}6 \text{ for any }k\in\N.
	\end{equation}
	Notice that if $k=1$, then \eqref{eq:BaselBounds} is true (we have equality). For $k\geq 2$, we have
	\begin{equation*}
		\sum_{j=k}^\infty \frac1{j^2} < \int_{k-1}^\infty \frac1{x^2}\,\mathrm dx = \frac1{k-1}\le\frac2k<\frac{1}k\cdot\frac{\pi^2}6.
	\end{equation*}
	
	Now, using \eqref{eq:BaselBounds} and the abbreviation $g_k\define 1_{\set{k<X_1\le k+1}}$,
	\begin{equation*}
	\begin{split}
		\sum_{j=1}^\infty \frac{\Varianz Y_j}{j^2}&\le\sum_{j=1}^\infty \frac{\Erwartung{}(Y_j^2)}{j^2} 
		=\sum_{j=1}^\infty \frac{\Erwartung(X_1^2 \cdot 1_{\set{X_1 \le j}})}{j^2} \\
		&=\lim_{N\to\infty} \sum_{j=1}^N \left(\frac{1}{j^2} \sum_{k=0}^{j-1} \Erwartung(X_1^2 \cdot g_k)\right)\\
		& = \lim_{N\to\infty} \sum_{k=0}^{N-1} \left(\Erwartung (X_1^2 \cdot g_k) \sum_{j=k+1}^N \frac{1}{j^2}\right)\\
		& %\overset{\eqref{eq:BaselBounds}}
		\le\frac{\pi^2}6\lim_{N\to\infty} \sum_{k=0}^{N-1} \frac{\Erwartung(X_1^2 \cdot g_k)}{k+1}\\
		&= \frac{\pi^2}6\sum_{k=0}^{\infty} \frac{1}{k+1} \int_{\{k<X_1 \le k+1 \}} X_1^2 \, \mathrm d \mathsf P\\
		&\le\frac{\pi^2}6\sum_{k=0}^\infty \frac{k+1}{k+1} \Erwartung{(X_1 \cdot g_k)}
		= \frac{\pi^2}6\Erwartung X_1 < \infty.
	\end{split}
	\end{equation*}
	
	It remains to show that $X_n-Y_n\to0$ as $n\to\infty$ in $L^1$-sense and a.s. First, I will prove the $L^1$ convergence. We have $\Erwartung{}\abs{X_n-Y_n}=\Erwartung(X_n-Y_n)=\Erwartung(X_n\cdot1_{\set{X_n>n}})=\Erwartung(X_1\cdot1_{\set{X_1>n}})$. Since $\Erwartung X_1<\infty$ by assumption, $\Erwartung(X_1\cdot1_{\set{X_1>n}})$ will converge to $0$, as $n\to\infty$, by Lebesgue's dominated convergence Theorem.
	
	Now, regarding the a.s. convergence, we have 
	\begin{equation*}
		\sum_{n=1}^\infty \Prob(X_n\neq Y_n)=\sum_{n=1}^\infty \Prob (X_n>n)=\sum_{n=1}^\infty \Prob (X_1>n)\le\int_0^\infty \Prob(X_1\geq x)\,\mathrm dx=\Erwartung X_1<\infty.
	\end{equation*}
	Hence, by Borel-Cantelli, with probability $1$ it is true that $X_n=Y_n$ for $n$ large enough. 
\end{proof}

\subsection*{Acknowledgements}
I would like to thank Jordan Stoyanov for his interest in my work and for his many useful comments and suggestions, most of which are taken into account in this version of the paper.

\end{document}